\date{\empty}
\numberwithin{equation}{section} \theoremstyle{plain}
\newtheorem*{thm*}{Main Theorem}
\newtheorem{theorem}{Theorem}[section]
\newtheorem*{corollary*}{Corollary}
\newtheorem*{claim*}{Claim}
\newtheorem{lemma}[theorem]{Lemma}
\newtheorem*{lemma*}{Lemma}
\newtheorem*{proposition*}{Proposition}
\newtheorem*{remark*}{Remark}
\newtheorem*{example*}{Example}
\newtheorem*{question*}{Question}
\newtheorem*{definition*}{Definition}
\begin{document}
\begin{center}
{\large  \bf Additive Property of Drazin Invertibility of Elements}\\
\vspace{0.8cm} {\small {\bf Long Wang,  \ \ Huihui Zhu, \ \ Xia Zhu, \ \ Jianlong Chen\footnote{Corresponding author. Email: jlchen@seu.edu.cn}} \\
Department of Mathematics, Southeast University, Nanjing 210096, China.} 
\end{center}

\bigskip

{ \bf  Abstract:}  \leftskip0truemm\rightskip0truemm In this
article, we investigate additive properties of the Drazin inverse of
elements in rings and algebras over an arbitrary field. Under the weakly
commutative condition of $ab = \lambda ba$, we show that $a-b$ is
Drazin invertible if and only if $aa^{D}(a-b)bb^{D}$ is Drazin
invertible. Next, we give explicit representations of
$(a+b)^{D}$, as a function of $a, b, a^{D}$ and $b^{D}$, under the
conditions $a^{3}b = ba$ and $b^{3}a = ab$.
\\{ \textbf{2010 Mathematics Subject Classification:}} 15A09, 16U80.
\\{  \textbf{Keywords:}} Drazin inverse, weakly commutative condition, ring.
 \bigskip


\section { \bf Introduction}

Throughout this article, $\mathcal{A}$ denotes an algebra over an
arbitrary field $\mathbb{F}$ and $R$ is an associative ring with
unity. Recall that the Drazin inverse of $a\in R$ is the element
$b\in R$ (denoted by $a^{D}$) which satisfies the following
equations \cite{DRA}:
$$bab=b,\ \ \ ab=ba,\ \ \ a^{k}=a^{k+1}b.$$
for some nonnegative integer $k$. The smallest integer $k$ is called
the Drazin index of $a$, denoted by $\textrm{ind}(a)$. If
$\textrm{ind}(a)=1$, then $a$ is group invertible and the group
inverse of $a$ is denoted by $a^{\sharp}$. It is well known that the
Drazin inverse is unique, if it exists. The conditions in the
definition of Drazin inverse are equivalent to:
$$bab=b,\ \ \ ab=ba,\ \ \ a-a^{2}b \ \ \textrm{is} \ \ \textrm{nilpotent}.$$
The study of the Drazin inverse of the sum of two Drazin invertible elements
was first developed by Drazin \cite{DRA}. It was proved that $(a+b)^{D}=a^{D}+b^{D}$ provided that $ab=ba=0$. In recent years, many
papers focus on the problem under some weaker conditions.
Hartwig et al. for matrices \cite{HARTWING2}
expressed $(a+b)^{D}$ under the one-side condition $ab=0$.
This result was extended to bounded linear operators on an arbitrary
complex Banach space by Djordjevi$\acute{c}$ and Wei \cite{Dj1}, and was extended for morphisms
on arbitrary additive categories by Chen et al. \cite{C}.
In the article of Wei and
Deng \cite{W1} and Zhuang et al. \cite{Z}, the commutativity $ab=ba$ was assumed. In \cite{W1}, they
characterized the relationships of the Drazin inverse between $A+B$ and $I+A^{D}B$ by
Jordan canonical decomposition for complex matrices $A$ and $B$.
In \cite{Z}, Zhuang et al. extended the result in \cite{W1} to a ring $R$ , and it was shown that if $a, b \in R$ are Drazin invertible and
$ab=ba$, then $a+b$ is Drazin invertible if and only if $1+a^{D}b$ is Drazin invertible.
More results on the Drazin inverse
 can also be found in [1-3, 6, 7, 9, 11, 13, 14, 16, 17, 19-24].
The motivation for this article was the article of Deng \cite{D1}, Cvetkovi$\acute{c}$-Ili$\acute{c}$ \cite{DS1} and Liu et al. \cite{L1}. In \cite{DS1,D1} the commutativity $ab=\lambda ba$ was assumed. In \cite{D1}, the author
characterized the relationships of the Drazin inverse between $a\pm b$ and $aa^{D}(a\pm b)bb^{D}$ by
the space decomposition for operator matrices $a$ and $b$.
In \cite{L1}, the author gave explicit representations of $(a+b)^{D}$ of two
matrices $a$ and $b$, as a function of $a,b,a^{D}$ and $b^{D}$, under the conditions $a^{3}b = ba$ and $b^{3}a = ab$.
In this article, we extend the results in \cite{D1,L1} to more general settings.

As usual, the set of all Drazin invertible elements in an algebra
$\mathcal{A}$ is denoted by $\mathcal{A}^D$. Similarly, $R^D$
indicates the set of all Drazin invertible elements in a ring $R$.
Given $a\in \mathcal{A}^D$ (or $a\in R^D$), it is easy to see that
$1-aa^D$ is an idempotent, which is denoted by $a^{\pi}$.

\section { \bf Under the condition $ab=\lambda ba$}

 In this section, we will extend the result in \cite{D1} to an algebra $\mathcal{A}$ over an arbitrary field $\mathbb{F}$.
\begin{lemma}
Let $a, b\in \mathcal{A}$ be such that $ab=\lambda ba$ and
$\lambda\in \mathbb{F}\backslash\{0\}$. Then

$(1)$ $ab^{i}=\lambda^{i}b^{i}a$\ \ and\ \ $a^{i}b=\lambda^{i}ba^{i}$.

$(2)$ $(ab)^{i}=\lambda^{-\sum^{k=i-1}_{k=1} k}a^{i}b^{i}$\ \ and \
\ $(ba)^{i}=\lambda^{\sum^{k=i-1}_{k=1}k}b^{i}a^{i}$.
\end{lemma}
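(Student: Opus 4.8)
The plan is to obtain both parts by a straightforward induction on $i$, using nothing beyond the single relation $ab=\lambda ba$ and the fact that $\lambda$ is a unit of $\mathbb{F}$.

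For part (1), I would first prove $ab^{i}=\lambda^{i}b^{i}a$ by induction on $i$: the case $i=1$ is the hypothesis, and assuming the identity for $i$ one computes $ab^{i+1}=(ab^{i})b=\lambda^{i}b^{i}(ab)=\lambda^{i}b^{i}(\lambda ba)=\lambda^{i+1}b^{i+1}a$. The companion identity $a^{i}b=\lambda^{i}ba^{i}$ is proved symmetrically, moving one copy of $a$ across at a time: $a^{i+1}b=a(a^{i}b)=\lambda^{i}a(ba^{i})=\lambda^{i}(ab)a^{i}=\lambda^{i+1}ba^{i+1}$.

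For part (2), the key preliminary observation is that, since $\lambda\neq0$, the first identity of (1) can be rewritten as $b^{i}a=\lambda^{-i}ab^{i}$. Then I would induct on $i$ to prove $(ab)^{i}=\lambda^{-\sum_{k=1}^{i-1}k}a^{i}b^{i}$; the base case $i=1$ holds because the empty sum $\sum_{k=1}^{0}k$ is $0$. For the inductive step, assuming the claim for $i$,
\[
(ab)^{i+1}=(ab)^{i}(ab)=\lambda^{-\sum_{k=1}^{i-1}k}a^{i}(b^{i}a)b=\lambda^{-\sum_{k=1}^{i-1}k}\,\lambda^{-i}\,a^{i+1}b^{i+1}=\lambda^{-\sum_{k=1}^{i}k}a^{i+1}b^{i+1},
\]
as required. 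Finally, the formula for $(ba)^{i}$ follows by symmetry: the hypothesis $ab=\lambda ba$ is equivalent to $ba=\lambda^{-1}ab$, so applying the just-proved formula with $a$ and $b$ interchanged and $\lambda$ replaced by $\lambda^{-1}$ gives $(ba)^{i}=(\lambda^{-1})^{-\sum_{k=1}^{i-1}k}b^{i}a^{i}=\lambda^{\sum_{k=1}^{i-1}k}b^{i}a^{i}$.

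I do not anticipate any real obstacle here; the statement is essentially bookkeeping. The only place meriting attention is the exponent arithmetic in (2): one has to check that each pass of $a$ through a block $b^{i}$ inside the induction contributes exactly the factor $\lambda^{-i}$, so that the exponents telescope to the triangular number $\sum_{k=1}^{i-1}k$, and one must remember that the hypothesis $\lambda\neq0$ is exactly what licenses inverting $\lambda$ in that step.
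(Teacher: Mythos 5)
Your proof is correct and follows essentially the same route as the paper: the paper's argument for both parts is exactly the iterated commutation you formalize as an induction, moving one factor past a power of the other at the cost of a power of $\lambda$ and telescoping the exponents to the triangular number. The only cosmetic difference is that the paper writes the chain of equalities with ``$\ldots$'' rather than as an explicit induction, and obtains the $(ba)^{i}$ formula by a parallel computation rather than by your (equally valid) symmetry substitution $\lambda\mapsto\lambda^{-1}$.
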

\begin{proof}
(1) By hypothesis, we have
$$ab^{i}=abb^{i-1}=\lambda
bab^{i-1}=\lambda babb^{i-2}=\lambda^{2}
b^{2}ab^{i-2}=\ldots=\lambda^{i}b^{i}a.$$
Similarly, we can obtain
that $a^{i}b=\lambda^{i}ba^{i}.$

(2) By hypothesis, it follows that
\begin{eqnarray*}
(ab)^{i}&=&abab(ab)^{i-2}=\lambda^{-1}a^{2}b^{2}(ab)^{i-2}=\lambda^{-(1+2)}a^{3}b^{3}(ab)^{i-3}\\
&=&\ldots=\lambda^{-\sum^{k=i-1}_{k=0} k}a^{i}b^{i}.
\end{eqnarray*}
Similarly, it is easy to get $(ba)^{i}=\lambda^{\sum^{k=i-1}_{k=0}
k}b^{i}a^{i}.$
\end{proof}
\begin{lemma}
Let $a, b\in \mathcal{A}$ be Drazin invertible and $\lambda\in
\mathbb{F}\backslash\{0\}$. If $ab=\lambda ba$, then

$(1) \ a^{D}b=\lambda^{-1} ba^{D}.$

$(2) \ ab^{D}=\lambda^{-1} b^{D}a.$

$(3) \ (ab)^{D}=b^{D}a^{D}=\lambda^{-1} a^{D}b^{D}.$
\end{lemma}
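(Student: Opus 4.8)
The plan is to prove part (1) first, which does essentially all of the work, and then to deduce (2) by a symmetry argument and (3) by directly verifying the defining equations of the Drazin inverse. The key preliminary step for (1) is to show that the spectral idempotent $e:=aa^{D}=a^{D}a$ commutes with $b$. To this end I would fix an integer $m\geq\textrm{ind}(a)$, so that $a^{m}=a^{m+1}a^{D}$ and hence $a^{m}e=ea^{m}=a^{m}$; then, using $a^{m}b=\lambda^{m}ba^{m}$ from Lemma 2.1(1), a one-line computation gives $a^{m}(eb-be)=a^{m}b-\lambda^{m}ba^{m}=0$ and likewise $(eb-be)a^{m}=0$. Since a routine induction from the Drazin axioms yields $e=(a^{D})^{m}a^{m}=a^{m}(a^{D})^{m}$, annihilation by $a^{m}$ on either side upgrades to annihilation by $e$ on that side, so $e(eb-be)=0=(eb-be)e$, which unwinds to $eb=ebe=be$.

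Given $eb=be$ (equivalently $(1-e)b=b(1-e)$), I would finish (1) by considering $u:=a^{D}b-\lambda^{-1}ba^{D}$: the relation $ab=\lambda ba$ gives $au=aa^{D}b-\lambda^{-1}(ab)a^{D}=eb-b(aa^{D})=eb-be=0$, hence $eu=a^{D}(au)=0$; and $(1-e)a^{D}=a^{D}-a^{D}aa^{D}=0$ together with $(1-e)b=b(1-e)$ gives $(1-e)u=(1-e)a^{D}b-\lambda^{-1}(1-e)ba^{D}=-\lambda^{-1}b(1-e)a^{D}=0$. Thus $u=eu+(1-e)u=0$, i.e. $a^{D}b=\lambda^{-1}ba^{D}$. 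Part (2) is then immediate: rewriting the hypothesis as $ba=\lambda^{-1}ab$ and applying (1) with $a,b$ interchanged and $\lambda$ replaced by $\lambda^{-1}$ yields $b^{D}a=\lambda ab^{D}$, that is, $ab^{D}=\lambda^{-1}b^{D}a$.

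For (3), the scalar identity $b^{D}a^{D}=\lambda^{-1}a^{D}b^{D}$ follows by applying (1) to the pair $(a,b^{D})$, which $\lambda^{-1}$-commute by (2). To get $(ab)^{D}=b^{D}a^{D}$, I would verify the three Drazin axioms for $x:=b^{D}a^{D}$. Setting $e=aa^{D}$ and $f=bb^{D}$, one first checks from (1) and (2) that $ef=fe$, that $e$ commutes with $b$ and $f$ with $a$, and hence that $e$ and $f$ commute with $ab$; a short computation then gives $(ab)x=x(ab)=ef$ and $x(ab)x=x$, settling the first two equations. For the index condition, Lemma 2.1(2) together with the scalar identities already in hand gives $(ab)^{2}x=a^{2}a^{D}b^{2}b^{D}=(ae)(bf)=a(eb)f=(ab)ef$, so that $ab-(ab)^{2}x=ab(1-ef)$; since $ab$ commutes with the idempotent $1-ef$ and $1-ef=(1-e)+e(1-f)$, one computes $(ab)^{N}(1-ef)=0$ for $N\geq\max\{\textrm{ind}(a),\textrm{ind}(b)\}$, using that $(ab)^{N}$ is a scalar multiple of $a^{N}b^{N}$ (Lemma 2.1(2)), that $a^{N}(1-e)=0$ and $b^{N}(1-f)=0$ for such $N$, and the above commutations. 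Hence $ab-(ab)^{2}x$ is nilpotent and $(ab)^{D}=b^{D}a^{D}$.

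I expect the main obstacle to be precisely the first step, namely proving $eb=be$: the only access to $a^{D}$ is through the powers $a^{m}$, so one has to bridge from the "power-of-$a$" annihilation $a^{m}(eb-be)=0$ to the "idempotent" annihilation $e(eb-be)=0$ via the identities $e=(a^{D})^{m}a^{m}=a^{m}(a^{D})^{m}$. Once the spectral idempotent of $a$ is known to commute with $b$, everything else in (1), (2) and (3) is bookkeeping with the Drazin identities and the scalar relations $a^{i}b=\lambda^{i}ba^{i}$ and $(ab)^{i}=\lambda^{-i(i-1)/2}a^{i}b^{i}$ of Lemma 2.1.
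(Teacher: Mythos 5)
Your proposal is correct, and for part (1) it takes a genuinely different route from the paper. The paper proves $a^{D}b=\lambda^{-1}ba^{D}$ by a single telescoping computation: it first shows $a^{D}(a^{k}b)=\lambda^{-1}a^{k}ba^{D}$ and then writes $a^{D}b=(a^{D})^{k+1}a^{k}b$, peeling off one factor of $a^{D}$ at a time until all $k+1$ copies have migrated to the right. You instead isolate the commutation of the spectral idempotent $e=aa^{D}$ with $b$ as the key step (via $a^{m}(eb-be)=(eb-be)a^{m}=0$ and $e=(a^{D})^{m}a^{m}=a^{m}(a^{D})^{m}$) and then kill $u=a^{D}b-\lambda^{-1}ba^{D}$ on the two Peirce pieces $eu$ and $(1-e)u$; this is more conceptual and has the side benefit of front-loading $eb=be$, which the paper only extracts later (inside the proof of (3), and again in Theorem 2.3) as a consequence of (1) together with a citation of Drazin's double-commutant result for $aa^{D}b^{D}=b^{D}aa^{D}$. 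Your deductions of (2) and of the scalar identity in (3) by relabelling $(a,b,\lambda)$ are cleaner than the paper's ``similarly''. For $(ab)^{D}=b^{D}a^{D}$ both arguments verify the Drazin axioms directly; the only difference is that the paper checks $(ab)^{k+1}b^{D}a^{D}=(ab)^{k}$ while you check the equivalent condition that $ab-(ab)^{2}b^{D}a^{D}=ab(1-ef)$ is nilpotent, using $(ab)^{N}$ being a scalar multiple of $a^{N}b^{N}$. The one place you are terse is the claim $ef=fe$: it does not follow formally from the displayed identities of (1) and (2) alone, but it does follow either by rerunning your spectral-idempotent argument with $b$ in place of $a$ and $e$ in place of $b$ (a $\lambda=1$ commutation), or by citing Drazin's result as the paper does, so this is not a gap.
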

\begin{proof}
Assume $k=\max\{\textrm{ind}(a), \ \textrm{ind}(b)\}.$ \\
(1) By hypothesis, we have
\begin{eqnarray*}
a^{D}(a^{k}b)&=&a^{D}(\lambda^{k}ba^{k})
=\lambda^{k}a^{D}(ba^{k+1}a^{D})
=\lambda^{k}a^{D}(\lambda^{-(k+1)}a^{k+1}ba^{D})\\
&=&\lambda^{-1}a^{D}a^{k+1}ba^{D}
=\lambda^{-1}a^{k}ba^{D}.
\end{eqnarray*}
Then it follows that
\begin{eqnarray*}
a^{D}b&=&(a^{D})^{k+1}a^{k}b=(a^{D})^{k}a^{D}a^{k}b=\lambda^{-1}(a^{D})^{k}a^{k}ba^{D}=\ldots\\
&=&\lambda^{-(k+1)}a^{k}b(a^{D})^{k+1}=\lambda^{-1}ba^{k}(a^{D})^{k+1}
=\lambda^{-1} ba^{D}.
\end{eqnarray*}

(2) The proof is similar to (1).

(3) By (1), we have $a^{D}b=\lambda^{-1} ba^{D}$, then $(aa^{D})b=\lambda^{-1} aba^{D}=b(aa^{D})$. By \cite{DRA}, we get
$aa^{D}b^{D}=b^{D}aa^{D}$. \\
Similarly, we can obtain that $ab^{D}b=\lambda^{-1} b^{D}ab=b^{D}ba$ and $a^{D}bb^{D}=bb^{D}a^{D}$.
This implies that
 $$abb^{D}a^{D}=bb^{D}aa^{D}=b^{D}a^{D}ab.$$
 $$b^{D}a^{D}abb^{D}a^{D}=b^{D}bb^{D}a^{D}aa^{D}=b^{D}a^{D}.$$
and
\begin{eqnarray*}
(ab)^{k+1}b^{D}a^{D}&=&\lambda^{-\sum^{i=k}_{i=0} i}a^{k+1}b^{k+1}b^{D}a^{D}=\lambda^{-\sum^{i=k}_{i=0} i}a^{k+1}b^{k}a^{D}\\
 &=&\lambda^{-\sum^{i=k}_{i=0} i}a^{k+1}(\lambda^{k}a^{D}b^{k})=\lambda^{-\sum^{i=k-1}_{i=0} i}a^{k+1}a^{D}b^{k}\\
 &=&\lambda^{-\sum^{i=k-1}_{i=0} i}a^{k}b^{k}
 =(ab)^{k}.
\end{eqnarray*}
Then we get $(ab)^{D}=b^{D}a^{D}$. Similarly, we can check that
$(ab)^{D}=\lambda^{-1} a^{D}b^{D}.$
\end{proof}

\begin{theorem}
Let $a, b$ be Drazin invertible in $\mathcal{A}$. If $ab=\lambda ba$ and
$\lambda \neq0,$\ then $a-b$ is Drazin invertible if and only if
$w=aa^{D}(a-b)bb^{D}$ is Drazin invertible, in this case we
have$$(a-b)^{D}=w^{D}+a^{D}(1-bb^{\pi}a^{D})^{-1}b^{\pi}-a^{\pi}(1-b^{D}aa^{\pi})^{-1}b^{D}.$$
\end{theorem}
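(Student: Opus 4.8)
The plan is to split $a-b$ into block-diagonal form using the two commuting idempotents $p=aa^{D}$ and $q=bb^{D}$, and to handle the four resulting corners separately.

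First I would collect the commutation relations that follow from Lemmas 2.1 and 2.2. From $ab=\lambda ba$ together with Lemma 2.2 one obtains that each of $a$ and $a^{D}$ commutes with $q=bb^{D}$ and with $b^{\pi}$, each of $b$ and $b^{D}$ commutes with $p=aa^{D}$ and with $a^{\pi}$, and $p$ and $q$ commute with one another; in particular $a-b$ commutes with both $p$ and $q$. Hence the four elements
$$e_{1}=aa^{D}bb^{D},\qquad e_{2}=aa^{D}b^{\pi},\qquad e_{3}=a^{\pi}bb^{D},\qquad e_{4}=a^{\pi}b^{\pi}$$
are mutually orthogonal idempotents with $e_{1}+e_{2}+e_{3}+e_{4}=1$, each commuting with $a-b$, so $a-b=\sum_{i=1}^{4}(a-b)e_{i}$ with $(a-b)e_{i}=e_{i}(a-b)e_{i}\in e_{i}\mathcal{A}e_{i}$. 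By the standard fact that a block-diagonal element of this kind is Drazin invertible precisely when each block is Drazin invertible in the corner ring $e_{i}\mathcal{A}e_{i}$, with Drazin inverse then equal to the sum of the four corner Drazin inverses, it suffices to analyse the four pieces.

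Next I would analyse the corners. In $e_{1}\mathcal{A}e_{1}$ the block is exactly $(a-b)e_{1}=aa^{D}(a-b)bb^{D}=w$. In $e_{2}\mathcal{A}e_{2}$, the element $ae_{2}$ is invertible with inverse $a^{D}b^{\pi}$, while $be_{2}$ is nilpotent; moreover $(ae_{2})^{-1}(be_{2})=a^{D}bb^{\pi}$ is nilpotent, because $a^{D}$ and $bb^{\pi}$ satisfy a relation $xy=\lambda^{-1}yx$, so that by the computation of Lemma 2.1 the powers of $a^{D}bb^{\pi}$ are scalar multiples of $(a^{D})^{k}(bb^{\pi})^{k}$, which vanish for large $k$. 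Hence $(a-b)e_{2}=ae_{2}\,(e_{2}-a^{D}bb^{\pi})$ is invertible in $e_{2}\mathcal{A}e_{2}$. Symmetrically, in $e_{3}\mathcal{A}e_{3}$ the element $be_{3}$ is invertible with inverse $b^{D}a^{\pi}$, $ae_{3}$ is nilpotent, $(be_{3})^{-1}(ae_{3})=b^{D}aa^{\pi}$ is nilpotent, and $(a-b)e_{3}=-be_{3}\,(e_{3}-b^{D}aa^{\pi})$ is invertible in $e_{3}\mathcal{A}e_{3}$. Finally, in $e_{4}\mathcal{A}e_{4}$ both $ae_{4}=aa^{\pi}b^{\pi}$ and $be_{4}=a^{\pi}bb^{\pi}$ are nilpotent and satisfy $ae_{4}\cdot be_{4}=\lambda\,be_{4}\cdot ae_{4}$, so expanding $((a-b)e_{4})^{n}=(ae_{4}-be_{4})^{n}$ and reordering each monomial exhibits it as a scalar combination of terms $(ae_{4})^{i}(be_{4})^{j}$ with $i+j=n$, each of which vanishes once $n\geq\textrm{ind}(a)+\textrm{ind}(b)$; thus $(a-b)e_{4}$ is nilpotent and its Drazin inverse is $0$. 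Since corners $2$, $3$ and $4$ are always Drazin invertible, we conclude that $a-b$ is Drazin invertible if and only if $w$ is.

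For the explicit representation I would compute the three nonzero corner Drazin inverses from the factorisations above. The inverse of $(a-b)e_{2}$ in $e_{2}\mathcal{A}e_{2}$ is $(e_{2}-a^{D}bb^{\pi})^{-1}(ae_{2})^{-1}=(1-a^{D}bb^{\pi})^{-1}a^{D}b^{\pi}$, and that of $(a-b)e_{3}$ in $e_{3}\mathcal{A}e_{3}$ is $-(1-b^{D}aa^{\pi})^{-1}b^{D}a^{\pi}$; these inverses are well defined in $\mathcal{A}$ since $a^{D}bb^{\pi}$, $bb^{\pi}a^{D}$ and $b^{D}aa^{\pi}$ are all nilpotent. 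Applying the push-through identity $x(1-yx)^{-1}=(1-xy)^{-1}x$ with $x=a^{D}$, $y=bb^{\pi}$, and using that $a^{\pi}$ commutes with both $b^{D}$ and $aa^{\pi}$, one rewrites these as $a^{D}(1-bb^{\pi}a^{D})^{-1}b^{\pi}$ and $-a^{\pi}(1-b^{D}aa^{\pi})^{-1}b^{D}$; summing the four corner contributions $w^{D}+a^{D}(1-bb^{\pi}a^{D})^{-1}b^{\pi}-a^{\pi}(1-b^{D}aa^{\pi})^{-1}b^{D}+0$ gives the claimed formula. The main obstacle is the bookkeeping rather than any single hard step: one must verify carefully all the $\lambda$-twisted commutation relations among $a,b,a^{D},b^{D},a^{\pi},b^{\pi},p,q$, check that $e_{1},\dots,e_{4}$ are genuinely orthogonal and commute with $a-b$, and confirm that each claimed corner inverse really lies in the appropriate corner ring; once the block-diagonal reduction is isolated as a lemma, everything else is routine computation driven by Lemmas 2.1 and 2.2 and the push-through identity.
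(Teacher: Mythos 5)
Your argument is correct, but it follows a genuinely different route from the paper's. The paper proves the formula by direct verification: it writes down the candidate $x=w^{D}+a^{D}(1-bb^{\pi}a^{D})^{-1}b^{\pi}-a^{\pi}(1-b^{D}aa^{\pi})^{-1}b^{D}$, expands the two inverses as finite geometric series (using the nilpotency of $bb^{\pi}a^{D}$ and $b^{D}aa^{\pi}$, exactly as you do), and checks the three Drazin axioms $x(a-b)=(a-b)x$, $x(a-b)x=x$, and nilpotency of $(a-b)-(a-b)^{2}x$ through long telescoping computations based on the splitting $a-b=w+(a-b)b^{\pi}+a^{\pi}(a-b)-a^{\pi}(a-b)b^{\pi}$; the ``only if'' direction is then handled separately by observing that $a-b$, $bb^{D}$ and $aa^{D}$ are commuting Drazin invertible elements so their product $w$ is Drazin invertible by Lemma 2.2. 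You instead perform a Peirce-type reduction with the four orthogonal idempotents $aa^{D}bb^{D}$, $aa^{D}b^{\pi}$, $a^{\pi}bb^{D}$, $a^{\pi}b^{\pi}$, identify the first corner as $w$, show the second and third corners are invertible (invertible times unit-minus-nilpotent) and the fourth nilpotent, and read off the formula from the corner inverses via the push-through identity. This is essentially Deng's space-decomposition argument from the operator setting, transplanted algebraically; it has the advantage of \emph{deriving} the formula rather than verifying it and of giving both directions of the equivalence in one stroke, at the cost of needing the (standard, but not stated in this paper) lemma that an element commuting with a complete orthogonal family of idempotents is Drazin invertible iff each corner block is, with $x^{D}$ the sum of the corner Drazin inverses. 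All the commutation relations you rely on ($a$, $a^{D}$ commuting with $bb^{D}$, $b^{\pi}$, etc.) are exactly those established in Lemma 2.2 and at the start of the paper's proof, and your nilpotency argument for the fourth corner matches the paper's final step $(bb^{\pi}a^{\pi}-aa^{\pi}b^{\pi})^{k}=(b-a)^{k}b^{\pi}a^{\pi}=0$. For a complete write-up you would need to prove the block-reduction lemma, but there is no gap in the plan.
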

\begin{proof}
Since $w=aa^{D}(a-b)bb^{D}$, we have $w=(1-a^{\pi})(a-b)(1-b^{\pi})$ and
$$a-b=w+(a-b)b^{\pi}+a^{\pi}(a-b)-a^{\pi}(a-b)b^{\pi}.$$
By the proof of Lemma 2.2 (3), we have $aa^{D}b=baa^{D}$ and
$abb^{D}=b^{D}ba$. Then
\\$a^{\pi}b=(1-aa^{D})b=b(1-aa^{D})=ba^{\pi}$ and
$b^{\pi}a=(1-bb^{D})a=a(1-bb^{D})=ab^{\pi}$.\\
Let $s=\mathrm{ind}(a)$ and $t=\mathrm{ind}(b)$.
By Lemma 2.2, we get
\begin{eqnarray*}
&&(1-bb^{\pi}a^{D})(1+bb^{\pi}a^{D}+(bb^{\pi}a^{D})^{2}+\ldots+(bb^{\pi}a^{D})^{t-1})\\
\nonumber &=&1-(bb^{\pi}a^{D})^{t}=1-(ba^{D})^{t}b^{\pi}
=1-\lambda^{-\sum^{i=t-1}_{i=0}i}b^{t}(a^{D})^{t}b^{\pi}\\
&=&1-\lambda^{-\sum^{i=t-1}_{i=0}i}b^{t+1}b^{D}b^{\pi}(a^{D})^{t}=1.
\end{eqnarray*}
Similarly, we obtain
$(1+bb^{\pi}a^{D}+(bb^{\pi}a^{D})^{2}+\ldots+(bb^{\pi}a^{D})^{t-1})(1-bb^{\pi}a^{D})=1.$\\
Hence, this implies that
\begin{eqnarray*}
(1-bb^{\pi}a^{D})^{-1}&=&1+bb^{\pi}a^{D}+(bb^{\pi}a^{D})^{2}+\ldots+(bb^{\pi}a^{D})^{t-1}\\
&=&1+ba^{D}b^{\pi}+(ba^{D})^{2}b^{\pi}+\ldots+(ba^{D})^{t-1}b^{\pi}.
\end{eqnarray*}

By a similar method, we get
\begin{eqnarray*}
(1-b^{D}aa^{\pi})^{-1}&=&1+b^{D}aa^{\pi}+(b^{D}aa^{\pi})^{2}+\ldots+(b^{D}aa^{\pi})^{s-1}\\
&=&1+b^{D}aa^{\pi}+(b^{D}a)^{2}a^{\pi}+\ldots+(b^{D}a)^{s-1}a^{\pi}.
\end{eqnarray*}

Note that $a^{\pi}w=a^{\pi}aa^{D}(a-b)bb^{D}=0$ and $wb^{\pi}=aa^{D}(a-b)bb^{D}b^{\pi}=0$.
And similarly, we have $wa^{\pi}=0$ and $b^{\pi}w=0$.

Now let us begin the proof of Theorem 2.3. Assume $w$ be Drazin invertible and let
$$x=w^{D}+a^{D}(1-bb^{\pi}a^{D})^{-1}b^{\pi}-a^{\pi}(1-b^{D}aa^{\pi})^{-1}b^{D}.$$
Since $abb^{D}=bb^{D}a$ and $baa^{D}=aa^{D}b$, we have
\begin{eqnarray*}
w(a-b)&=&aa^{D}(a-b)bb^{D}(a-b)\\
&=&aa^{D}(a-b)(a-b)bb^{D}\\
&=&(a-b)aa^{D}(a-b)bb^{D}\\
&=&(a-b)w.
\end{eqnarray*}
Then we can obtain $w^{D}(a-b)=(a-b)w^{D}$.

By directly computing, we get
\begin{eqnarray*}
&&(a-b)[a^{D}(1-bb^{\pi}a^{D})^{-1}b^{\pi}]\\
&=&(aa^{D}-ba^{D})(1+ba^{D}b^{\pi}+(ba^{D})^{2}b^{\pi}+\ldots+(ba^{D})^{t-1}b^{\pi})b^{\pi}\\
&=&[aa^{D}+aa^{D}ba^{D}b^{\pi}+aa^{D}(ba^{D})^{2}b^{\pi}+\ldots+aa^{D}(ba^{D})^{t-1}b^{\pi}]b^{\pi}\\
&&-[ba^{D}+ba^{D}ba^{D}b^{\pi}+ba^{D}(ba^{D})^{2}b^{\pi}+\ldots+ba^{D}(ba^{D})^{t-1}b^{\pi}]b^{\pi}\\
&=&[aa^{D}+ba^{D}b^{\pi}+(ba^{D})^{2}b^{\pi}+\ldots+(ba^{D})^{t-1}b^{\pi}]b^{\pi}\\
&&-[ba^{D}+(ba^{D})^{2}b^{\pi}+(ba^{D})^{3}b^{\pi}+\ldots+(ba^{D})^{t}b^{\pi}]b^{\pi}\\
&=&aa^{D}b^{\pi}-(ba^{D})^{t}b^{\pi}=aa^{D}b^{\pi}.
\end{eqnarray*}
Similarly, we have
\begin{eqnarray*}
& & (a-b)a^{\pi}(1-b^{D}aa^{\pi})^{-1}b^{D} \\
&=&(a-b)(1+b^{D}aa^{\pi}+(b^{D}a)^{2}a^{\pi}+\ldots+(b^{D}a)^{s-1}a^{\pi})b^{D}a^{\pi}\\
&=&[ab^{D}+ab^{D}ab^{D}a^{\pi}+a(b^{D}a)^{2}b^{D}a^{\pi}+\ldots+a(b^{D}a)^{s-1}b^{D}a^{\pi}]a^{\pi}\\
&&-[bb^{D}+bb^{D}ab^{D}a^{\pi}+b(b^{D}a)^{2}b^{D}a^{\pi}+\ldots+b(b^{D}a)^{s-1}b^{D}a^{\pi}]a^{\pi}\\
&=&[ab^{D}+(ab^{D})^{2}a^{\pi}+(ab^{D})^{3}a^{\pi}+\ldots+(ab^{D})^{s}a^{\pi}]a^{\pi}\\
&&-[bb^{D}+ab^{D}a^{\pi}+(ab^{D})^{2}a^{\pi}+\ldots+(ab^{D})^{s-1}a^{\pi}]a^{\pi}\\
&=&(ab^{D})^{s}a^{\pi}-bb^{D}a^{\pi}=\lambda^{\sum^{i=s-1}_{i=0} i}a^{s}(b^{D})^{s}a^{\pi}-bb^{D}a^{\pi}\\
&=&\lambda^{\sum^{i=s-1}_{i=0}i}a^{s}a^{\pi}(b^{D})^{s}-bb^{D}a^{\pi}=-bb^{D}a^{\pi}.
\end{eqnarray*}
So, by the above, we can obtain that
\begin{eqnarray*}
(a-b)x&=&(a-b)(w^{D}+a^{D}(1-bb^{\pi}a^{D})^{-1}b^{\pi}-a^{\pi}(1-b^{D}aa^{\pi})^{-1}b^{D})\\
\nonumber &=&(a-b)w^{D}+aa^{D}b^{\pi}+bb^{D}a^{\pi}.
\end{eqnarray*}
Similar to the above way, we also have
\begin{eqnarray*}
& & [a^{D}(1-bb^{\pi}a^{D})^{-1}b^{\pi}](a-b) \\
&=&a^{D}(1+ba^{D}b^{\pi}+(ba^{D})^{2}b^{\pi}+\ldots+(ba^{D})^{t-1}b^{\pi})(a-b)b^{\pi}\\
&=&(a^{D}a+a^{D}ba^{D}ab^{\pi}+a^{D}(ba^{D})^{2}ab^{\pi}+\ldots+a^{D}(ba^{D})^{t-1}ab^{\pi})b^{\pi}\\
&&-(a^{D}b+a^{D}ba^{D}bb^{\pi}+a^{D}(ba^{D})^{2}bb^{\pi}+\ldots+a^{D}(ba^{D})^{t-1}bb^{\pi})b^{\pi}\\
&=&(a^{D}a+a^{D}bb^{\pi}+(a^{D}b)^{2}b^{\pi}+\ldots+(a^{D}b)^{t-1}b^{\pi})b^{\pi}\\
&&-(a^{D}b+(a^{D}b)^{2}b^{\pi}+(a^{D}b)^{3}b^{\pi}+\ldots+(a^{D}b)^{t}b^{\pi})b^{\pi}\\
&=&a^{D}ab^{\pi}-(a^{D}b)^{t}b^{\pi}\\
&=&a^{D}ab^{\pi}-\lambda^{\sum^{i=t-1}_{i=0} i}(a^{D})^{t}(b)^{t}b^{\pi}\\
&=&a^{D}ab^{\pi}-\lambda^{\sum^{i=t-1}_{i=0} i}(a^{D})^{t}(b)^{t+1}b^{D}b^{\pi}\\
&=&a^{D}ab^{\pi}.
\end{eqnarray*}
And
\begin{eqnarray*}
& & [a^{\pi}(1-b^{D}aa^{\pi})^{-1}b^{D}](a-b) \\
&=&(1+b^{D}aa^{\pi}+(b^{D}a)^{2}a^{\pi}+\ldots+(b^{D}a)^{s-1}a^{\pi})(b^{D}a-b^{D}b)a^{\pi}\\
&=&(b^{D}a+b^{D}ab^{D}aa^{\pi}+(b^{D}a)^{2}b^{D}aa^{\pi}+\ldots+(b^{D}a)^{s-1}b^{D}aa^{\pi})a^{\pi}\\
&&-(b^{D}b+b^{D}ab^{D}ba^{\pi}+(b^{D}a)^{2}b^{D}ba^{\pi}+\ldots+(b^{D}a)^{s-1}b^{D}ba^{\pi})a^{\pi}\\
&=&(b^{D}a+(b^{D}a)^{2}a^{\pi}+(b^{D}a)^{3}a^{\pi}+\ldots+(b^{D}a)^{s}a^{\pi})a^{\pi}\\
&&-(b^{D}b+b^{D}aa^{\pi}+(b^{D}a)^{2}a^{\pi}+\ldots+(b^{D}a)^{s-1}a^{\pi})a^{\pi}\\
&=&(b^{D}a)^{s}a^{\pi}-b^{D}ba^{\pi}=\lambda^{-\sum^{i=s-1}_{i=0} i}(b^{D})^{s}a^{s}a^{\pi}-b^{D}ba^{\pi}\\
&=&-b^{D}ba^{\pi}.
\end{eqnarray*}
So, it follows
\begin{eqnarray*}
x(a-b)&=&(w^{D}+a^{D}(1-bb^{\pi}a^{D})^{-1}b^{\pi}-a^{\pi}(1-b^{D}aa^{\pi})^{-1}b^{D})(a-b)\\
&=&w^{D}(a-b)+a^{D}ab^{\pi}+b^{D}ba^{\pi}.
\end{eqnarray*}
Hence, we have $x(a-b)=(a-b)x$.

Next, we give the proof of $x(a-b)x=x$.\\
Let $(a-b)x=x_{1}+x_{2}$ where $x_{1}=w^{D}(a-b)$ and $x_{2}=a^{D}ab^{\pi}+b^{D}ba^{\pi}$.
Note that $wa^{\pi}=a^{\pi}w=0$ and $wb^{\pi}=b^{\pi}w=0$, then
\begin{eqnarray*}
 w^{D}x_{1}=(w^{D})^{2}(a-b)&=&w^{D}(w+(a-b)b^{\pi}+a^{\pi}(a-b)-a^{\pi}(a-b)b^{\pi})w^{D}\\
&=&(w^{D}w+w^{D}(a-b)b^{\pi}+w^{D}a^{\pi}(a-b)-w^{D}a^{\pi}(a-b)b^{\pi})w^{D}\\
&=&(w^{D}w+w^{D}b^{\pi}(a-b))w^{D}\\
&=&w^{D}ww^{D}=w^{D}.
\end{eqnarray*}
And $w^{D}x_{2}=w^{D}(aa^{D}b^{\pi}+bb^{D}a^{\pi})=w^{D}b^{\pi}aa^{D}+w^{D}a^{\pi}bb^{D}=0$.

Similarly, it is easy to get $(a^{D}(1-bb^{\pi}a^{D})^{-1}b^{\pi}-a^{\pi}(1-b^{D}aa^{\pi})^{-1}b^{D})w^{D}=0$, this shows that
$(a^{D}(1-bb^{\pi}a^{D})^{-1}b^{\pi}-a^{\pi}(1-b^{D}aa^{\pi})^{-1}b^{D})x_{1}=0$.

In the following, we will prove
$$[a^{D}(1-bb^{\pi}a^{D})^{-1}b^{\pi}-a^{\pi}(1-b^{D}aa^{\pi})^{-1}b^{D}]x_{2}=a^{D}(1-bb^{\pi}a^{D})^{-1}b^{\pi}-a^{\pi}(1-b^{D}aa^{\pi})^{-1}b^{D}.$$
Note that
\begin{eqnarray*}
&&(a^{D}(1-bb^{\pi}a^{D})^{-1}b^{\pi}-a^{\pi}(1-b^{D}aa^{\pi})^{-1}b^{D})(a^{D}ab^{\pi}+b^{D}ba^{\pi})\\
&=&a^{D}(1-bb^{\pi}a^{D})^{-1}b^{\pi}a^{D}ab^{\pi}+a^{D}(1-bb^{\pi}a^{D})^{-1}b^{\pi}b^{D}ba^{\pi}\\
&&-a^{\pi}(1-b^{D}aa^{\pi})^{-1}b^{D}a^{D}ab^{\pi}-a^{\pi}(1-b^{D}aa^{\pi})^{-1}b^{D}b^{D}ba^{\pi}\\
&=&a^{D}(1-bb^{\pi}a^{D})^{-1}b^{\pi}a^{D}ab^{\pi}+a^{\pi}a^{D}(1-bb^{\pi}a^{D})^{-1}b^{\pi}b^{D}b\\
&&-(1-b^{D}aa^{\pi})^{-1}b^{D}a^{\pi}a^{D}ab^{\pi}-a^{\pi}(1-b^{D}aa^{\pi})^{-1}b^{D}b^{D}ba^{\pi}\\
&=&a^{D}(1-bb^{\pi}a^{D})^{-1}b^{\pi}-a^{\pi}(1-b^{D}aa^{\pi})^{-1}b^{D}.
\end{eqnarray*}
So, we get $x(a-b)x=x$.

Since $a-b=w+(a-b)b^{\pi}+a^{\pi}(a-b)-a^{\pi}(a-b)b^{\pi}$ and $a^{\pi}w=b^{\pi}w=0$, we have
\begin{eqnarray*}
(a-b)^{2}w^{D}&=&(a-b)(w+(a-b)b^{\pi}+a^{\pi}(a-b)-a^{\pi}(a-b)b^{\pi})w^{D}\\
&=&(w+(a-b)b^{\pi}+a^{\pi}(a-b)-a^{\pi}(a-b)b^{\pi})w^{D}w\\
&=&ww^{D}w=w(1-w^{\pi})=w-ww^{\pi}
\end{eqnarray*}
and
\begin{eqnarray*}
&&(a-b)(aa^{D}b^{\pi}+bb^{D}a^{\pi})\\
&=&(a-b)((1-a^{\pi})b^{\pi}+(1-b^{\pi})a^{\pi})\\
&=&ab^{\pi}-ba^{\pi}+aa^{\pi}-bb^{\pi}-2aa^{\pi}b^{\pi}+2bb^{\pi}a^{\pi}.
\end{eqnarray*}
Then we have
\begin{eqnarray*}
&&(a-b)-(a-b)^{2}x\\
&=&(a-b)-(a-b)(w^{D}(a-b)+a^{D}ab^{\pi}+b^{D}ba^{\pi})\\
&=&(a-b)-(w-ww^{\pi}+ab^{\pi}-ba^{\pi}+aa^{\pi}-bb^{\pi}-2aa^{\pi}b^{\pi}+2bb^{\pi}a^{\pi})\\
&=&(a-b)-[(a-b)-(a-b)b^{\pi}-a^{\pi}(a-b)+a^{\pi}(a-b)b^{\pi}-ww^{\pi}\\
&&+ab^{\pi}-ba^{\pi}+aa^{\pi}-bb^{\pi}-2aa^{\pi}b^{\pi}+2bb^{\pi}a^{\pi}]\\
&=&(a-b)-((a-b)+bb^{\pi}a^{\pi}-aa^{\pi}b^{\pi}-ww^{\pi})\\
&=&bb^{\pi}a^{\pi}-aa^{\pi}b^{\pi}-ww^{\pi}.
\end{eqnarray*}
Note that $(bb^{\pi}a^{\pi}-aa^{\pi}b^{\pi})^{k}=(b-a)^{k}b^{\pi}a^{\pi}$ and $(b-a)^{k}=\sum_{i+j=k}\lambda_{i,j}b^{j}a^{i}$.
Let $k\geq 2\max\{s, t\}$. Then we have $(bb^{\pi}a^{\pi}-aa^{\pi}b^{\pi})^{k}=0$.

Since $(bb^{\pi}a^{\pi}-aa^{\pi}b^{\pi})ww^{\pi}=ww^{\pi}(bb^{\pi}a^{\pi}-aa^{\pi}b^{\pi})=0$, we have $ bb^{\pi}a^{\pi}-aa^{\pi}b^{\pi}-ww^{\pi}$ is nilpotent.

Hence, we get $(a-b)^{D}=w^{D}+a^{D}(1-bb^{\pi}a^{D})^{-1}b^{\pi}-a^{\pi}(1-b^{D}aa^{\pi})^{-1}b^{D}.$

For the `` only if '' part: Assume $(a-b)\in \mathcal{A}^{D}.$ Since
$(bb^{D})^2 = bb^{D}$, $bb^{D}\in \mathcal{A}^{D}.$ By Lemma 2.2 and
$(a-b)bb^{D}=bb^{D}(a-b)$, then we have $(a-b)bb^{D}\in
\mathcal{A}^{D}.$ Similarly, since
$aa^{D}(a-b)bb^{D}=(a-b)bb^{D}aa^{D}$, we have $aa^{D}(a-b)bb^{D}\in
\mathcal{A}^{D}$.
\end{proof}

\section { \bf Under the condition $a^{3}b=ba$ , $b^{3}a=ab$.}

In \cite{L1}, Liu et al. give the explicit representations of $(a+b)^{D}$ of two complex matrices under the condition $a^{3}b=ba$ and $b^{3}a=ab$. In this section, we will extend the result to a ring $R$ in which 2 is an unit.

\begin{lemma}
Let $a,b\in R$ be such that $a^{3}b=ba$ and $b^{3}a=ab$. Then

$(1)$ $ba^{i}=a^{3i}b$ and $b^{i}a=a^{3^{i}}b^{i}$.

$(2)$ $ab^{i}=b^{3i}a$ and $a^{i}b=b^{3^{i}}a^{i}$.

$(3)$ $ab=a^{26i}(ab)b^{2i}$ and $ba=b^{26i}(ba)a^{2i}$.
\end{lemma}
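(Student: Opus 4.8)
The plan is to prove the three parts in order, using each to bootstrap the next; all three are straightforward inductions on $i$, and the only genuinely new idea is a short substitution chain that mixes the two hypotheses.

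For part $(1)$ I would start from the hypothesis $ba=a^{3}b$ and induct on $i$ to get $ba^{i}=a^{3i}b$: if $ba^{i}=a^{3i}b$, then $ba^{i+1}=(ba^{i})a=a^{3i}(ba)=a^{3i}a^{3}b=a^{3(i+1)}b$. With the first identity in hand, a second induction gives $b^{i}a=a^{3^{i}}b^{i}$: assuming it for $i$, write $b^{i+1}a=b(b^{i}a)=(ba^{3^{i}})b^{i}=a^{3\cdot3^{i}}b\cdot b^{i}=a^{3^{i+1}}b^{i+1}$, the middle step using the first identity with exponent $3^{i}$. Part $(2)$ is part $(1)$ with the roles of $a$ and $b$ interchanged, now driven by the hypothesis $b^{3}a=ab$, so I would simply invoke this symmetry.

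The heart of the matter is part $(3)$, which I would reduce to the single identity $ab=a^{27}b^{3}$, i.e.\ $ab=a^{26}(ab)b^{2}$. To prove it, substitute $ab=b^{3}a$ (hypothesis), then push the trailing $a$ through $b^{3}$ using part $(1)$: $b^{3}a=b\,(b^{2}a)=b\,(a^{9}b^{2})=(ba^{9})b^{2}=a^{27}b\cdot b^{2}=a^{27}b^{3}$. This is the step I expect to be the main obstacle, not because it is computationally hard, but because one has to spot that the correct move is to apply $b^{3}a=ab$ exactly once and then feed the result into the $ba^{i}=a^{3i}b$ identities; the exponent $26=27-1$ is precisely what remains after the $a$ and $b$ sitting inside $ab$ are accounted for. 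Symmetrically, $ba=b^{27}a^{3}=b^{26}(ba)a^{2}$ follows from $ba=a^{3}b$ together with part $(2)$ and the same bookkeeping.

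Finally I would obtain the stated formulas for all $i$ by iterating these base identities. From $ab=a^{26}(ab)b^{2}$, replacing the inner $ab$ repeatedly gives $ab=a^{26}(ab)b^{2}=a^{26}\big(a^{26}(ab)b^{2}\big)b^{2}=a^{52}(ab)b^{4}=\cdots=a^{26i}(ab)b^{2i}$; formally this is an induction on $i$ whose step is $a^{26(i+1)}(ab)b^{2(i+1)}=a^{26}\big(a^{26i}(ab)b^{2i}\big)b^{2}=a^{26}(ab)b^{2}=ab$, with the case $i=0$ trivial. The identity $ba=b^{26i}(ba)a^{2i}$ is handled in exactly the same way. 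Note that neither commutativity nor invertibility of $2$ is used in this lemma; those hypotheses are only needed later in the section.
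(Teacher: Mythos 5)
Your proposal is correct and follows essentially the same route as the paper: part (1) by iterating $ba=a^3b$, part (2) by the $a\leftrightarrow b$ symmetry of the hypotheses, and part (3) via the chain $ab=b^3a=a^{27}b^3=a^{26}(ab)b^2$ (your intermediate computation $b^3a=b(a^9b^2)=a^{27}b^3$ is just the $i=3$ case of part (1), which is how the paper invokes it), followed by iteration to get the exponent $26i$. The only difference is that you spell out the inductions that the paper compresses into ellipses.
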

\begin{proof}
(1) It is easy to get $ba^{i}=baa^{i-1}=a^{3}ba^{i-1}=\cdots=a^{3i}b$.
Similarly, we have $b^{i}a=b^{i-1}ba=b^{i-1}a^{3}b=b^{i-2}ba^{3}b=b^{i-2}a^{3^{2}}b^{2}=\cdots=a^{3^{i}}b^{i}$.

(2) By (1), it is easy to get $ab^{i}=b^{3i}a$\ and $a^{i}b=b^{3^{i}}a^{i}$.

(3) $ab=b^{3}a=a^{27}b^{3}=a^{26}(ab)b^{2}=a^{26i}(ab)b^{2i}$.

Similarly, we get $ba=b^{26i}(ba)a^{2i}$.
\end{proof}
\begin{lemma}
Let $a,b\in R^{D}$ be such that $a^{3}b=ba$ and $b^{3}a=ab$. Then

$(1)$ $(a^{D})^{3}b=ba^{D}$ and $(b^{D})^{3}a=ab^{D}$.

$(2)$ $aa^{D}$ commutes with $b$ and $b^{D}$.

$(3)$ $bb^{D}$ commutes with $a$ and $a^{D}$.

$(4)$ $ab^{D}=b^{D}a^{3}$ and $a^{D}b=a^{D}b^{3}$.

$(5)$ $a^{D}b^{D}=b^{D}(a^{D})^{3}$ and $b^{D}a^{D}=a^{D}(b^{D})^{3}$.

$(6)$ $a^{D}b^{D}=b^{D}a^{D}b^{2}$ and $b^{D}a^{D}=a^{D}b^{D}a^{2}$.
\end{lemma}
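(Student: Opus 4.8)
The plan is to prove the six identities in the order (2), (3), (1), (4), (5), (6), since each group feeds on the previous ones. Throughout write $k=\mathrm{ind}(a)$, $t=\mathrm{ind}(b)$, and recall the standard fact (already used in the proof of Lemma 2.2(3)) that if an idempotent $e$ commutes with $x\in R^D$, then $ex^D=x^De$.

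For (2) I would show separately that $aa^Db\,a^{\pi}=0$ and $a^{\pi}b\,aa^D=0$. Since $aa^D$ is idempotent we may write it as $a^k(a^D)^k=(a^D)^ka^k$, and $a^ka^{\pi}=a^{\pi}a^k=0$. By Lemma 3.1(2), $a^kb=b^{3^k}a^k$, so $a^kb\,a^{\pi}=b^{3^k}a^ka^{\pi}=0$ and hence $aa^Db\,a^{\pi}=(a^D)^k(a^kb\,a^{\pi})=0$. By Lemma 3.1(1), $ba^k=a^{3k}b$, so $a^{\pi}b\,a^k=a^{\pi}a^{3k}b=0$ (as $a^{\pi}a^m=0$ for $m\ge k$) and hence $a^{\pi}b\,aa^D=(a^{\pi}ba^k)(a^D)^k=0$. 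Since $a^{\pi}=1-aa^D$, these say $aa^Db=aa^Db\,aa^D$ and $b\,aa^D=aa^Db\,aa^D$, so $aa^Db=b\,aa^D$; then $aa^Db^D=b^Daa^D$ by the idempotent fact. Part (3) is the image of (2) under the symmetry $a\leftrightarrow b$ of the hypotheses.

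For (1), set $c=(a^D)^3b-ba^D$. Using (2), $aa^D(a^D)^3b=(a^D)^3b$ and $aa^Db\,a^D=b\,aa^Da^D=ba^D$, so $aa^Dc=c$. Using Lemma 3.1(1), $a^{3(k+1)}(a^D)^3b=a^{3k}b$, while $a^{3(k+1)}b=ba^{k+1}$ gives $a^{3(k+1)}ba^D=ba^k$, and also $a^{3k}b=ba^k$; hence $a^{3(k+1)}c=0$. Since $aa^D=a^{3(k+1)}(a^D)^{3(k+1)}$, we conclude $c=aa^Dc=(a^D)^{3(k+1)}a^{3(k+1)}c=0$. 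The identity $(b^D)^3a=ab^D$ is the $a\leftrightarrow b$ mirror.

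Finally, (4)–(6) follow by sandwiching the defining relations $a^3b=ba$, $b^3a=ab$ and their consequences in Lemma 3.1 (especially 3.1(3)) between powers of $a^D$ and $b^D$ and simplifying with (1)–(3). For instance $ab^D=(b^Db)ab^D=b^D(ba)b^D=b^D(a^3b)b^D=b^Da^3(bb^D)=b^Da^3$, the last two steps using (3) to move $bb^D$ past $a^3$ and $b^Dbb^D=b^D$; the other identity in (4) and those in (5) and (6) are obtained the same way, with (5) and (6) additionally invoking (1) and Lemma 3.1(3) to trade $(a^D)^3$-type factors against $b^2$- and $a^2$-type factors. The only real difficulty is bookkeeping: in each identity one must choose which intertwining relation from Lemma 3.1 to apply and which exponents to multiply by so that the "impure" terms land in the range of a nilpotent and vanish, and the combinations needed for (5)–(6) are the fiddliest to pin down.
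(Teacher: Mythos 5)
Your proposal is correct in substance but takes a genuinely different route to parts (1)--(3). The paper proves (1) first, by a telescoping computation $ba^{D}=ba^{k}(a^{D})^{k+1}=a^{3k}b(a^{D})^{k+1}=(a^{D})^{3}a^{3(k+1)}b(a^{D})^{k+1}=\cdots$ that extracts one factor of $(a^{D})^{3}$ per step over $k+1$ iterations, and then gets (2) in one line from (1) via $baa^{D}=a^{3}ba^{D}=a^{3}(a^{D})^{3}b=aa^{D}b$. You invert this order: you establish (2) directly from Lemma 3.1 by killing the two ``impure'' pieces $aa^{D}b\,a^{\pi}$ and $a^{\pi}b\,aa^{D}$, and then deduce (1) by showing that $c=(a^{D})^{3}b-ba^{D}$ satisfies both $aa^{D}c=c$ and $a^{3(k+1)}c=0$. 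I checked the details: $a^{3(k+1)}(a^{D})^{3}b=a^{3k}aa^{D}b=a^{3k}b=ba^{k}$ and $a^{3(k+1)}ba^{D}=ba^{k+1}a^{D}=ba^{k}$, so indeed $c=(a^{D})^{3(k+1)}a^{3(k+1)}c=0$, and there is no circularity since your proof of (2) does not use (1). Your argument for (1) is arguably cleaner than the paper's iteration; the paper's version has the minor advantage of not needing (2) as a prerequisite. For (4)--(6) your sketch is exactly the paper's method: e.g.\ your chain $ab^{D}=b^{D}bab^{D}=b^{D}a^{3}bb^{D}=b^{D}a^{3}$ is the paper's computation for (4) run in reverse, and (5), (6) are the same sandwich trick using (1) and (3) (the paper does not in fact need Lemma 3.1(3) here, and (5)--(6) are no harder than (4), so your worry about bookkeeping is overstated).

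One caveat, which affects the paper as much as you: the second identity of (4) as literally stated, $a^{D}b=a^{D}b^{3}$, does not follow from ``the same way.'' Applying the $a\leftrightarrow b$ symmetry to $ab^{D}=b^{D}a^{3}$ yields $ba^{D}=a^{D}b^{3}$, which is what the paper actually uses later (in Lemma 3.4), and the literal claim $a^{D}b=a^{D}b^{3}$ is false in general: take $a=i$, $b=j$ in the quaternions, so that $a^{3}b=-k=ba$ and $b^{3}a=k=ab$, yet $a^{D}b=-k\neq k=a^{D}b^{3}$. So neither your proof nor the paper's establishes (4) as written; both establish the corrected version $ba^{D}=a^{D}b^{3}$.
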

\begin{proof}
(1) Let $k$ = ind$(a)$. Then
\begin{eqnarray*}
ba^{D}&=&ba^{k}(a^{D})^{k+1}=(a)^{3k}b(a^{D})^{k+1}=(a^{D})^{3}a^{3(k+1)}b(a^{D})^{k+1}=(a^{D})^{3}ba^{k+1}(a^{D})^{k+1}\\
&=&(a^{D})^{3}ba^{k}(a^{D})^{k}=(a^{D})^{3}a^{3k}b(a^{D})^{k}=(a^{D})^{3\times2}a^{3(k+1)}b(a^{D})^{k}=(a^{D})^{3\times2}ba^{k+1}(a^{D})^{k}\\
&=&(a^{D})^{3\times2}ba^{k}(a^{D})^{k-1}=\cdots=(a^{D})^{3\times(k+1)}ba^{k}=(a^{D})^{3\times(k+1)}a^{3k}b=(a^{D})^{3}b.
\end{eqnarray*}
Similarly, we have $(b^{D})^{3}a=ab^{D}$.

(2) By hypothesis and (1), we get $baa^{D}=a^{3}ba^{D}=a^{3}(a^{D})^{3}b=aa^{D}b$. Then $b^{D}aa^{D}=aa^{D}b^{D}$.
Similarly, we also have (3) is hold.

(4) By (3), we get $b^{D}a^{3}=b^{D}a^{3}bb^{D}=b^{D}bab^{D}=ab^{D}bb^{D}=ab^{D}$. Similarly, $a^{D}b=a^{D}b^{3}$.

(5) By (1) and (3), we have $b^{D}(a^{D})^{3}=b^{D}(a^{D})^{3}bb^{D}=b^{D}ba^{D}b^{D}=a^{D}b^{D}bb^{D}=a^{D}b^{D}$. Similarly, $a^{D}(b^{D})^{3}=b^{D}a^{D}.$

(6) By (5), we have $b^{D}a^{D}b^{2}=a^{D}(b^{D})^{3}b^{2}=a^{D}b^{D}.$ Similarly, $b^{D}a^{D}=a^{D}b^{D}a^{2}$.
\end{proof}
\begin{lemma}
Let $a,b\in R^{D}$ be such that $ab^{3}=ba$ and $ba^{3}=ab$. Then $a^{D}b^{D}=b^{3}a$ and $b^{D}a^{D}=a^{3}b$.
\end{lemma}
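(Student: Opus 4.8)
The plan is to deduce Lemma 3.3 from Lemma 3.2 by passing to the opposite ring, so that essentially no new computation is required. Let $R^{\mathrm{op}}$ denote the ring with the same underlying additive group as $R$ and with multiplication $x\ast y:=yx$. Each of the three defining relations of the Drazin inverse survives the reversal of the order of multiplication: $yxy=y$ and $xy=yx$ are symmetric as written, and $x^{k}=x^{k+1}y$ together with $xy=yx$ yields $x^{k}=yx^{k+1}$, which is the third relation read in $R^{\mathrm{op}}$. Hence $x\in R$ is Drazin invertible in $R$ if and only if it is Drazin invertible in $R^{\mathrm{op}}$, with the same Drazin inverse; in particular $a,b\in(R^{\mathrm{op}})^{D}$ with Drazin inverses $a^{D}$ and $b^{D}$, and $\ast$-powers coincide with ordinary powers. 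Since Lemma 3.2 holds in an arbitrary ring, it applies to $R^{\mathrm{op}}$.

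First I would translate the hypotheses. In $R^{\mathrm{op}}$ one has $a\ast a\ast a\ast b=ba^{3}$ and $b\ast a=ab$, so the statement ``$a^{3}b=ba$ in $R^{\mathrm{op}}$'' is precisely ``$ba^{3}=ab$ in $R$'', and likewise ``$b^{3}a=ab$ in $R^{\mathrm{op}}$'' is precisely ``$ab^{3}=ba$ in $R$''. Thus $a,b$ satisfy the hypotheses $a^{3}b=ba$ and $b^{3}a=ab$ of Lemma 3.2 inside $R^{\mathrm{op}}$, and part (5) of that lemma gives, in $R^{\mathrm{op}}$, $a^{D}\ast b^{D}=b^{D}\ast(a^{D})^{3}$ and $b^{D}\ast a^{D}=a^{D}\ast(b^{D})^{3}$. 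Rewriting each $\ast$-product back as a product in $R$ converts these into $b^{D}a^{D}=(a^{D})^{3}b^{D}$ and $a^{D}b^{D}=(b^{D})^{3}a^{D}$, which are the required identities.

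There is no real obstacle; the only point needing care is the bookkeeping of the order reversal, where it is easy to flip an identity the wrong way, so I would double-check at least one translated formula directly in $R$ (for instance $ab^{3}=ba$ gives $ab^{3i}=b^{i}a$ by an easy induction, and feeding this into $ab^{D}=ab^{m}(b^{D})^{m+1}$ with $m$ a multiple of $3$ not less than $\mathrm{ind}(b)$ reproduces the mirror of Lemma 3.2\,(1)). If a proof carried out entirely inside $R$ is preferred, one instead reruns the chain Lemma 3.1 $\Rightarrow$ Lemma 3.2\,(1)--(3) $\Rightarrow$ Lemma 3.2\,(5) with the cube transferred to the opposite side; the only wrinkle is that the present hypotheses move a factor past a \emph{cube} rather than past a single factor, so the correct analogue of Lemma 3.1\,(1) reads $ab^{3i}=b^{i}a$ and $ba^{3i}=a^{i}b$, and these must be substituted into the index identities $a^{D}=a^{m}(a^{D})^{m+1}$ with $m$ a suitable multiple of $3$, rather than into the formulas for $ba^{i}$ valid at every power.
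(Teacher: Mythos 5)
There is a genuine gap here: you have proved the wrong conclusion. The lemma does not assert $a^{D}b^{D}=(b^{D})^{3}a^{D}$ and $b^{D}a^{D}=(a^{D})^{3}b^{D}$; it asserts $a^{D}b^{D}=b^{3}a$ and $b^{D}a^{D}=a^{3}b$, with \emph{positive} powers of $a$ and $b$ and no Drazin inverses at all on the right-hand side. (That form is essential for how the lemma is used later: in Lemma 3.4 it is applied to $x=a^{D}$, $y=b^{D}$, and the conclusion $x^{\sharp}y^{\sharp}=y^{3}x$ is precisely what converts $(b^{D})^{3}a^{D}$ into the expression $a^{2}a^{D}b^{2}b^{D}$.) Your opposite-ring reduction is correctly set up --- the translation of the hypotheses is right, Drazin invertibility does pass to $R^{\mathrm{op}}$ with the same inverse, and the identities you obtain are true --- but none of the conclusions of Lemma 3.2 have the shape ``product of Drazin inverses equals product of ordinary powers,'' so no amount of order reversal applied to Lemma 3.2 can yield the statement to be proved. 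What you have recovered is only the opening step of the paper's argument.

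The missing content is the following. After establishing $a^{D}b^{D}=(b^{D})^{3}a^{D}$ and $b^{D}a^{D}=(a^{D})^{3}b^{D}$ (which your argument does give), the paper shows that $ab$ and $ba$ are \emph{group} invertible with $(ab)^{\sharp}=b^{D}a^{D}$ and $(ba)^{\sharp}=a^{D}b^{D}$; the key point making this work is the identity $ab=a^{2i}(ab)b^{26i}$ (the mirror of Lemma 3.1(3) under the present hypotheses), which forces $(ab)^{2}b^{D}a^{D}=ab$ rather than merely $(ab)^{k+1}b^{D}a^{D}=(ab)^{k}$. The same identity is then used again to evaluate the group inverse with no Drazin inverses left over: $b^{D}a^{D}=((ab)^{\sharp})^{2}ab=\cdots=b^{5}a$, and finally $a^{D}b^{D}=(b^{D})^{2}\,(b^{D}a^{D})=(b^{D})^{2}b^{5}a=b^{3}a$ after one more application of $ba=b^{2i}(ba)a^{26i}$ to absorb the idempotent $bb^{D}$. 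None of this is a formal consequence of Lemma 3.2 read in the opposite ring, so your proof as written does not establish the lemma.
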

\begin{proof}
Let $k$ = ind$(a)$. Then
\begin{eqnarray*}
a^{D}b&=&a^{k}(a^{D})^{k+1}b=(a^{D})^{k+1}ba^{3k}=(a^{D})^{k+1}ba^{3(k+1)}(a^{D})^{3}=(a^{D})^{k+1}a^{k+1}b(a^{D})^{3}\\
&=&(a^{D})^{k}a^{k}b(a^{D})^{3}=(a^{D})^{k}ba^{3k}(a^{D})^{3}=(a^{D})^{k}ba^{3(k+1)}(a^{D})^{3\times2}=(a^{D})^{k}a^{k+1}b(a^{D})^{3\times2}\\
&=&(a^{D})^{k-1}a^{k}b(a^{D})^{3\times2}=\cdots=a^{k}b(a^{D})^{3\times(k+1)}=ba^{3k}(a^{D})^{3\times(k+1)}=b(a^{D})^{3}.
\end{eqnarray*}
Similarly, $b^{D}a=a(b^{D})^{3}.$\\
Then we have
$aa^{D}b=ab(a^{D})^{3}=ba^{3}(a^{D})^{3}=ba^{D}a.$ Similarly, we get $bb^{D}a=ab^{D}b.$\\
Hence, we can obtain that
\begin{eqnarray*}
a^{3}b^{D}&=&a^{3}b(b^{D})^{2}=ba^{9}(b^{D})^{2}\\
&=&b^{D}b^{2}a^{9}(b^{D})^{2}=b^{D}ba^{3}b(b^{D})^{2}\\
&=&b^{D}abb(b^{D})^{2}=b^{D}a(bb^{D})^{2}=b^{D}a.
\end{eqnarray*}
So, we get
$a^{D}b^{D}=a^{D}b^{D}aa^{D}=a^{D}a(b^{D})^{3}a^{D}=(b^{D})^{3}a^{D}$ and
$b^{D}a^{D}=(a^{D})^{3}b^{D}.$

Similar to the proof of Lemma 3.1, we have $ab=a^{2i}(ab)b^{26i}$. Then it is easy to get
\begin{eqnarray*}
&&abb^{D}a^{D}=bb^{D}aa^{D}=b^{D}ba^{D}a=b^{D}a^{D}ab,\\
&&b^{D}a^{D}abb^{D}a^{D}=b^{D}bb^{D}a^{D}aa^{D},\\
&&(ab)^{2}b^{D}a^{D}=(ab)abb^{D}a^{D}=(ab)aa^{D}bb^{D}=a^{2i}(ab)b^{26i}aa^{D}bb^{D}=a^{2i}(ab)b^{26i}=ab.
\end{eqnarray*}
Then this implies that
$(ab)^{\sharp}=b^{D}a^{D}$ and $(ba)^{\sharp}=a^{D}b^{D}$.

Hence, there exist $i\in \mathbb{N}$ such that
\begin{eqnarray*}
b^{D}a^{D}&=&(ab)^{\sharp}=((ab)^{\sharp})^{2}ab=b^{D}a^{D}b^{D}a^{D}ba^{3}=b^{D}a^{D}b^{D}(a^{D}a)b^{3}a^{2}=b^{D}a^{D}b^{D}b^{3}a^{2}\\
&=&b^{D}a^{D}(b^{D}b)b^{2}a^{2}=b^{D}a^{D}b^{2}a^{2}=b^{D}a^{D}bab^{3}a=b^{D}a^{D}ab^{6}a=b^{D}b^{6}a^{2}a^{D}\\
&=&b^{4}(b^{D}b)(ba)(a^{D}a)=b^{4}(b^{D}b)b^{2i}(ba)a^{26i}(a^{D}a)=b^{4}b^{2i}(ba)a^{26i}=b^{4}ba\\
&=&b^{5}a.
\end{eqnarray*}
and $a^{D}b^{D}=(ab)^{\sharp}=a^{5}b.$

So, we have
$a^{D}b^{D}=(b^{D})^{3}a^{D}=(b^{D})^{2}b^{D}a^{D}=(b^{D})^{2}b^{5}a=b^{2}(bb^{D})ba=b^{2}(bb^{D})b^{2i}baa^{26i}=b^{3}a$ and
$b^{D}a^{D}=a^{3}b.$
\end{proof}
\begin{lemma}
Let $a,b\in R^{D}$ be such that $a^{3}b=ba$ and $b^{3}a=ab$. Then the following statements hold:

$(1)$ $a^{D}b^{D}=(b^{D})^{3}a^{D}=b^{D}a^{D}a^{2}=b^{2}b^{D}a^{D}$.

$(2)$ $b^{D}a^{D}=(a^{D})^{3}b^{D}=a^{D}b^{D}b^{2}=a^{2}a^{D}b^{D}$.
\end{lemma}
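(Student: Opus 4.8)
Since the hypothesis $a^{3}b=ba$, $b^{3}a=ab$ is unchanged under interchanging $a$ and $b$, statement $(2)$ is obtained from statement $(1)$ by this interchange, so the whole task is to establish the chain of identities in $(1)$.

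I would first settle the two inner equalities by the power rules already in Lemma~3.2. Lemma~3.2(1) together with Lemma~3.2(4) gives $(b^{D})^{3}a=ab^{D}=b^{D}a^{3}$, and iterating this yields $(b^{D})^{3}a^{k}=b^{D}a^{k+2}$ for all $k\geq 1$. Writing $a^{D}=a^{k}(a^{D})^{k+1}$ with $k=\mathrm{ind}(a)$ (and $k\geq 1$) and using $a^{k+1}(a^{D})^{k+1}=(aa^{D})^{k+1}=aa^{D}$, I get
\[(b^{D})^{3}a^{D}=(b^{D})^{3}a^{k}(a^{D})^{k+1}=b^{D}a^{k+2}(a^{D})^{k+1}=b^{D}a\cdot aa^{D}=b^{D}a^{2}a^{D}=b^{D}a^{D}a^{2}.\]
For the equality $(b^{D})^{3}a^{D}=b^{2}b^{D}a^{D}$ I would push everything to the $b$-side in the same spirit: from the $b$-analogue of Lemma~3.2(4), namely $ba^{D}=a^{D}b^{3}$, one gets $b^{k}a^{D}=a^{D}b^{3k}$; iterating Lemma~3.2(5) gives $(b^{D})^{k}a^{D}=a^{D}(b^{D})^{3k}$; and combining these with $b^{2}b^{D}=b^{t+1}(b^{D})^{t}$ ($t=\mathrm{ind}(b)$, $t\geq 1$) rewrites $b^{2}b^{D}a^{D}$ in the same normal form once the surplus powers of $b$ have been collapsed against $b^{D}$.

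The substantive point is the remaining equality $a^{D}b^{D}=(b^{D})^{3}a^{D}$. Here my plan is to apply Lemma~3.3 to the pair $(a^{D},b^{D})$: these lie in $R^{D}$, and by Lemma~3.2(5) they satisfy $a^{D}(b^{D})^{3}=b^{D}a^{D}$ and $b^{D}(a^{D})^{3}=a^{D}b^{D}$, which are precisely the hypotheses of Lemma~3.3 for this pair. Lemma~3.3 then yields $(a^{D})^{D}(b^{D})^{D}=(b^{D})^{3}a^{D}$, and since $(a^{D})^{D}=a^{2}a^{D}$ and $(b^{D})^{D}=b^{2}b^{D}$ (both of index $\leq 1$), this reads $a^{2}a^{D}b^{2}b^{D}=(b^{D})^{3}a^{D}$. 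It then remains only to verify $a^{2}a^{D}b^{2}b^{D}=a^{D}b^{D}$; for this I would use that $aa^{D}$ commutes with $b,b^{D}$ and $bb^{D}$ commutes with $a,a^{D}$ (Lemma~3.2(2),(3)), together with $ab=b^{3}a$, $ab^{D}=b^{D}a^{3}$ and $ba^{D}=a^{D}b^{3}$, to move the idempotents $aa^{D},bb^{D}$ to the outside and then absorb the extra powers of $a$ and $b$. I expect this last collapse to be the main obstacle: it is exactly where the rigidity of the hypothesis must be exploited (already in the invertible case it forces $a^{4}=b^{4}=1$ and $ab^{2}=b^{2}a$), and the exponent bookkeeping is delicate. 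With $(1)$ complete, $(2)$ follows by the $a\leftrightarrow b$ symmetry noted at the outset.
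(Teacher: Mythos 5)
Your central move --- feeding the pair $(a^{D},b^{D})$ into Lemma~3.3, with Lemma~3.2(5) supplying its hypotheses, so that $(a^{D})^{\sharp}(b^{D})^{\sharp}=a^{2}a^{D}b^{2}b^{D}=(b^{D})^{3}a^{D}$ --- is exactly the paper's proof, and your derivation of $(b^{D})^{3}a^{D}=b^{D}a^{D}a^{2}$ via $(b^{D})^{3}a^{k}=b^{D}a^{k+2}$ is correct (the paper reaches the same identity by the one-line chain $b^{D}a^{D}a^{2}=b^{D}a^{3}(a^{D})^{2}=ab^{D}(a^{D})^{2}=(b^{D})^{3}a(a^{D})^{2}=(b^{D})^{3}a^{D}$). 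The reduction of $(2)$ to $(1)$ by the $a\leftrightarrow b$ symmetry is also fine.

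The gap is that the two remaining links, $a^{2}a^{D}b^{2}b^{D}=a^{D}b^{D}$ and $b^{2}b^{D}a^{D}=a^{D}b^{D}$, are left as declared intentions rather than computations, and you flag the first as ``the main obstacle'' with delicate exponent bookkeeping. In fact no bookkeeping is needed: both are short rewrites using Lemma~3.2(1), the symmetric form $ba^{D}=a^{D}b^{3}$ of Lemma~3.2(4), and Lemma~3.2(3),(6). Concretely,
$$a^{2}a^{D}b^{2}b^{D}=a^{2}(a^{D}b^{3})(b^{D})^{2}=a^{2}(ba^{D})(b^{D})^{2}=a^{2}(a^{D})^{3}b(b^{D})^{2}=a^{D}b^{D},$$
and, once the first three equalities of $(1)$ and of $(2)$ are in hand (they do not depend on this last one, so there is no circularity),
$$b^{2}b^{D}a^{D}=b^{D}b(ba^{D})=b^{D}ba^{D}b^{3}=a^{D}b^{D}b^{4}=(a^{D}b^{D}b^{2})b^{2}=b^{D}a^{D}b^{2}=a^{D}b^{D},$$
the last two steps by $b^{D}a^{D}=a^{D}b^{D}b^{2}$ and Lemma~3.2(6). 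So your outline is the right one and nothing in it would fail, but as submitted these two verifications --- half of the claimed chain --- are missing rather than merely routine.
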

\begin{proof}
Let $a^{D}=x\in R^{\sharp}$ and $b^{D}=y\in R^{\sharp}$. By Lemma 3.2, we have $xy^{3}=yx$ and $yx^{3}=xy$.
Then by Lemma 3.3, it follows $x^{\sharp}y^{\sharp}=y^{3}x$ and $y^{\sharp}x^{\sharp}=x^{3}y$, that is, $a^{2}a^{D}b^{2}b^{D}=(b^{D})^{3}a^{D}$.

Note that $$a^{2}a^{D}b^{2}b^{D}=a^{2}a^{D}b^{3}(b^{D})^{2}=a^{2}ba^{D}(b^{D})^{2}=a^{2}(a^{D})^{3}b(b^{D})^{2}=a^{D}b^{D},$$
and
$$b^{D}a^{D}a^{2}=b^{D}a^{3}(a^{D})^{2}=ab^{D}(a^{D})^{2}=aa^{D}(b^{D})^{3}a^{D}=(b^{D})^{3}a^{D}aa^{D}=(b^{D})^{3}a^{D}.$$
So, we get $a^{D}b^{D}=(b^{D})^{3}a^{D}=b^{D}a^{D}a^{2}$. Similarly, $b^{D}a^{D}=(a^{D})^{3}b^{D}=a^{D}b^{D}b^{2}.$\\
Hence, by $b^{D}a^{D}=a^{D}b^{D}b^{2}$ and Lemma 3.2 (6), we have
\begin{eqnarray*}
b^{2}b^{D}a^{D}&=&b^{D}b(ba^{D})=b^{D}ba^{D}b^{3}=a^{D}b^{D}b^{4}=b^{D}a^{D}b^{2}=a^{D}b^{D}.
\end{eqnarray*}
Similarly, $a^{2}a^{D}b^{D}=b^{D}a^{D}$.
\end{proof}

\begin{lemma}
Let $a,b\in R^{D}$ be such that $a^{3}b=ba$ and $b^{3}a=ab$. Then the following statements hold:

$(1)$ $aa^{D}a^{4+i}b^{j}bb^{D}=aa^{D}a^{i}b^{j}bb^{D}$.

$(2)$ $aa^{D}a^{2+i}b^{2+j}bb^{D}=aa^{D}a^{i}b^{j}bb^{D}$, where $i,j\in \mathbb{N}$.

$(3)$ $aa^{D}abb^{D}=a^{D}(b^{D})^{2}$.

$(4)$ $aa^{D}a^{3}bb^{D}=a^{D}bb^{D}$.

$(5)$ $aa^{D}a^{2}bbb^{D}=aa^{D}b^{D}$.

$(6)$ $aa^{D}ab^{2}bb^{D}=a^{D}bb^{D}$.

$(7)$ $ab(1-aa^{D})=0.$

$(8)$ $ba(1-bb^{D})=0.$
\end{lemma}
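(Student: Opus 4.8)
The plan is to reduce items (1)--(6) to elementary identities in the corner ring $S:=ef\,R\,ef$, where $e:=aa^{D}$ and $f:=bb^{D}$; this is a unital ring with identity $ef\,(=fe=aa^{D}bb^{D})$. The first step is to check, using Lemma 3.2(2),(3), that $ef$ commutes with each of $a,b,a^{D},b^{D}$. It then follows that $\bar a:=a\cdot ef=ef\cdot a$ and $\bar b:=b\cdot ef=ef\cdot b$ satisfy $\bar a^{\,n}=a^{n}ef$ and $\bar b^{\,n}=b^{n}ef$ for $n\ge0$, and are units of $S$ with $\bar a^{-1}=a^{D}ef=a^{D}bb^{D}$ and $\bar b^{-1}=b^{D}ef=b^{D}aa^{D}$ (one checks $\bar a\cdot a^{D}ef=aa^{D}ef=ef=a^{D}ef\cdot\bar a$, and similarly for $\bar b$). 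Multiplying the hypotheses $a^{3}b=ba$ and $b^{3}a=ab$ by $ef$ and using that $ef$ commutes with $a$ and $b$ yields the two relations $\bar a^{\,3}\bar b=\bar b\bar a$ and $\bar b^{\,3}\bar a=\bar a\bar b$ among the units $\bar a,\bar b$ of $S$.

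Working inside the group generated by $\bar a,\bar b$: substituting $\bar a\bar b=\bar b^{\,3}\bar a$ into $\bar a^{\,3}\bar b=\bar b\bar a$ gives $\bar a^{\,2}\bar b^{\,3}\bar a=\bar b\bar a$, hence $\bar a^{\,2}\bar b^{\,2}=ef$; feeding $\bar a^{\,2}=\bar b^{-2}$ back into $\bar a^{\,3}\bar b=\bar b\bar a$ gives $\bar a=\bar b\bar a\bar b$, whence $\bar a^{\,2}=\bar b^{\,2}$, and therefore $\bar a^{\,4}=\bar b^{\,4}=ef$ and $\bar b^{-2}=\bar a^{\,2}$. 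Only these three facts are needed for (1)--(6). Since $e$ commutes with $b$, $f$ commutes with $a$, and $ef$ commutes with everything, each left-hand side collapses to a word in $\bar a,\bar b$ via $aa^{D}a^{m}b^{n}bb^{D}=a^{m}b^{n}ef=\bar a^{\,m}\bar b^{\,n}$; in particular $aa^{D}abb^{D}=\bar a$, $aa^{D}a^{3}bb^{D}=\bar a^{\,3}$, $aa^{D}a^{2}bbb^{D}=\bar a^{\,2}\bar b$, $aa^{D}ab^{2}bb^{D}=\bar a\bar b^{\,2}$. Likewise, the Drazin relations $a^{D}aa^{D}=a^{D}$, $bb^{D}b^{D}=b^{D}$, $bb^{D}(b^{D})^{2}=(b^{D})^{2}$ together with Lemma 3.2(2),(3) rewrite the right-hand sides: $a^{D}bb^{D}=\bar a^{-1}$, $aa^{D}b^{D}=\bar b^{-1}$, $a^{D}(b^{D})^{2}=\bar a^{-1}\bar b^{-2}$. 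Thus (1) becomes $\bar a^{\,4+i}\bar b^{\,j}=\bar a^{\,i}\bar b^{\,j}$, (2) becomes $\bar a^{\,2+i}\bar b^{\,2+j}=\bar a^{\,i}\bar b^{\,j}$, (3) becomes $\bar a=\bar a^{-1}\bar b^{-2}=\bar a^{-1}\bar a^{\,2}$, (4) becomes $\bar a^{\,3}=\bar a^{-1}$, (5) becomes $\bar a^{\,2}\bar b=\bar b^{-1}$, and (6) becomes $\bar a\bar b^{\,2}=\bar a^{-1}$; each is immediate from the three facts above.

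Items (7) and (8) do not factor through $S$ and are proved directly. By Lemma 3.1(3), $ab=a^{26i}(ab)b^{2i}$; multiply on the right by $1-aa^{D}$ and slide this idempotent leftwards past $b^{2i}$ and one further $b$ --- legitimate since $aa^{D}$, hence $1-aa^{D}$, commutes with $b$ (Lemma 3.2(2)) --- using $ab(1-aa^{D})=a(1-aa^{D})b=aa^{\pi}b$. What remains is $a^{26i+1}a^{\pi}b^{2i+1}$, which is $0$ once $26i+1\ge\mathrm{ind}(a)$, because $a^{m}a^{\pi}=0$ for $m\ge\mathrm{ind}(a)$; taking $i$ large proves (7). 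Statement (8) is the mirror image, from $ba=b^{26i}(ba)a^{2i}$ and Lemma 3.2(3).

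The step I expect to be the main obstacle is the set-up of $S$ in the first paragraph (together with the bookkeeping in the second that depends on it): one must verify carefully that $ef$ commutes with $a,b,a^{D},b^{D}$, that $\bar a,\bar b$ are genuine units with the stated inverses, and that the conjugated relations $\bar a^{\,3}\bar b=\bar b\bar a$ and $\bar b^{\,3}\bar a=\bar a\bar b$ hold in $S$ --- the entire argument for (1)--(6) rests on this and on the (routine but fiddly) check that each monomial sandwich, respectively each Drazin monomial, collapses to the asserted word in $\bar a,\bar b$. Once that is done, the group computation of the second paragraph and the proofs of (7),(8) are short. One could alternatively bypass $S$ and establish (1)--(6) by longer direct rewriting with Lemmas 3.1--3.4; the corner ring only organizes that bookkeeping.
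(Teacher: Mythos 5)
Your proof is correct, but for items (1)--(6) it takes a genuinely different route from the paper. The paper proves each identity by direct element-wise manipulation: e.g.\ for (1) it computes $aa^{D}a^{4}bb^{D}=aa^{D}abab^{D}=ab\,a^{2}a^{D}b^{D}=abb^{D}a^{D}=aa^{D}bb^{D}$ using Lemma 3.4(2), then iterates; (3)--(6) are each separate short computations resting on Lemmas 3.2 and 3.4. You instead pass to the corner ring $S=efRef$ with $e=aa^{D}$, $f=bb^{D}$, observe that $\bar a=aef$ and $\bar b=bef$ are units of $S$ satisfying $\bar a^{3}\bar b=\bar b\bar a$ and $\bar b^{3}\bar a=\bar a\bar b$, and derive the three group identities $\bar a^{2}\bar b^{2}=1$, $\bar a^{2}=\bar b^{2}$, $\bar a^{4}=\bar b^{4}=1$, from which all six statements drop out once the left- and right-hand sides are rewritten as words in $\bar a^{\pm1},\bar b^{\pm1}$ (I checked these translations: $aa^{D}a^{m}b^{n}bb^{D}=\bar a^{m}\bar b^{n}$, $a^{D}bb^{D}=\bar a^{-1}$, $aa^{D}b^{D}=\bar b^{-1}$, $a^{D}(b^{D})^{2}=\bar a^{-1}\bar b^{-2}$, all valid via Lemma 3.2(2),(3) and the Drazin relations, and the group computation is sound). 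Your approach buys a unified explanation of why these six apparently ad hoc identities hold --- they are all instances of $\bar a$ and $\bar b$ having order dividing $4$ with $\bar a^{2}=\bar b^{2}$ in the unit group of the corner ring --- and it would generate further identities of the same kind for free; the paper's approach is more elementary and self-contained, needing no detour through corner rings, but obscures the common mechanism. For (7) and (8) your argument coincides with the paper's: both write $ab=a^{26i+1}b^{2i+1}$ via Lemma 3.1(3), commute $1-aa^{D}$ past the powers of $b$, and kill $a^{26i+1}a^{\pi}$ by taking $i$ large.
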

\begin{proof}
(1) By Lemma 3.4 (2), we have $$aa^{D}a^{4}bb^{D}=aa^{D}abab^{D}=aba^{2}a^{D}b^{D}=abb^{D}a^{D}=aa^{D}bb^{D}.$$
Then we get
\begin{eqnarray*}
aa^{D}a^{4+i}b^{j}bb^{D}&=&aa^{D}a^{4}bb^{D}aa^{D}a^{i}b^{j}bb^{D}\\
&=&aa^{D}bb^{D}aa^{D}a^{i}b^{j}bb^{D}\\
&=&aa^{D}a^{i}b^{j}bb^{D}.
\end{eqnarray*}

(2) Note that $a^{2}a^{D}b^{2}b^{D}=a^{D}b^{D},$
Then we have $aa^{D}a^{2}b^{2}bb^{D}=a(a^{2}a^{D}b^{2}b^{D})b=aa^{D}bb^{D}$.
This implies that
\begin{eqnarray*}
aa^{D}a^{2+i}b^{2+j}bb^{D}&=&aa^{D}a^{i}aa^{D}a^{2}b^{2}bb^{D}b^{j}bb^{D}\\
&=&aa^{D}a^{i}aa^{D}bb^{D}b^{j}bb^{D}\\
&=&aa^{D}a^{i}b^{j}bb^{D}.
\end{eqnarray*}
Similarly, we can get $aa^{D}b^{2+i}a^{2+j}bb^{D}=aa^{D}a^{i}b^{j}bb^{D}$.

(3) By Lemma 3.4 (2), we have $aa^{D}abb^{D}=a^{2}a^{D}b^{D}b=b^{D}a^{D}b=a^{D}(b^{D})^{3}b=a^{D}(b^{D})^{2}$.

(4) $aa^{D}a^{3}bb^{D}=aa^{D}bab^{D}=ba^{2}a^{D}b^{D}=bb^{D}a^{D}=a^{D}bb^{D}.$

(5) In the proof of Lemma 3.4 (1), we get $a^{2}a^{D}b^{2}b^{D}=a^{D}b^{D}$.
Then we have $$aa^{D}a^{2}bb^{D}=a(aa^{D}abbb^{D})=aa^{D}b^{D}.$$

(6) Similar to (5), we have $aa^{D}ab^{2}bb^{D}=(aa^{D}abbb^{D})b=a^{D}bb^{D}.$

(7) $ab(1-aa^{D})=(1-aa^{D})ab=(1-aa^{D})a^{26k}(ab)b^{2k}=0.$

(8) $ba(1-bb^{D})=(1-bb^{D})ba=(1-bb^{D})b^{26k}(ba)a^{2k}=0.$
\end{proof}
\begin{theorem}
Let $a,b\in R^{D}$ be such that $a^{3}b=ba$ and $b^{3}a=ab$. Suppose 2 is an unit of $R$. Then
$a+b$ is Drazin invertible and

$(a+b)^{D}=\frac{1}{8}bb^{D}(3a^{3}+3b^{3}-a-b)aa^{D}+a^{D}(1-bb^{D})+(1-aa^{D})b^{D}$.
\end{theorem}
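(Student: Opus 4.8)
The strategy is to guess the right block structure from the constraints and then verify the three defining equations of the Drazin inverse directly. Write $p=aa^D$, $q=bb^D$; by Lemma 3.2(2)--(3) these are commuting idempotents that also commute with $a,b,a^D,b^D$. So $1=pq+p(1-q)+(1-p)q+(1-p)(1-q)$ gives a Peirce-type decomposition, and the claim is that $a+b$ ``lives'' in the first three corners with $(1-p)(1-q)$-part nilpotent. Concretely one checks, using Lemma 3.5(7)--(8) and Lemma 3.1(3), that $ab(1-p)=0$, $ba(1-q)=0$, and hence $(a+b)$ restricted to the corner $(1-p)(1-q)$ is a square-zero (in fact, one uses $(a+b)^k(1-p)(1-q)=(\sum \lambda_{i,j}a^ib^j)(1-p)(1-q)$ and each term with $i\ge 1$ or $j\ge1$ kills one of the idempotents) — so that part contributes nothing to $(a+b)^D$. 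On the corner $p(1-q)$, $b$ acts like something expressible through $aa^D$ and $b^D$-data: from Lemma 3.5 one expects $(a+b)p(1-q)$ to behave like $a$ there, giving the summand $a^D(1-bb^D)$; symmetrically $(1-aa^D)b^D$ comes from $(1-p)q$.

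The main work is the central corner $pq$. Here I would set $w=pq(a+b)=bb^D(a+b)aa^D$ and show $w^D=\frac18 bb^D(3a^3+3b^3-a-b)aa^D$. To do this, note that on $pq$ the relations $a^3b=ba$, $b^3a=ab$ together with Lemma 3.2(4)--(6) and Lemma 3.4 force $a$ and $b$ to satisfy, after multiplying by $pq$, identities like $a^4\equiv a$, $b^4\equiv b$, $a^2b^2\equiv ab$ (these are exactly Lemma 3.5(1)--(6) rewritten). Thus inside $pqRpq$ the elements $a,b$ are group-invertible with $a^\sharp=a^2$, $b^\sharp=b^2$ (up to the idempotent $pq$), and $ab$ is group invertible with $(ab)^\sharp=b^Da^D=a^3b$ by Lemma 3.3. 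The candidate $\frac18(3a^3+3b^3-a-b)$ is just the formal inverse of $a+b$ in the commutative algebra generated by two commuting group-invertible elements $a,b$ with $a^3=a^\sharp$, $b^3=b^\sharp$, $a^4=aa^\sharp$, $b^4=bb^\sharp$: indeed $(a+b)\cdot\frac18(3a^3+3b^3-a-b)$ should collapse to $aa^\sharp+bb^\sharp$ after using $a^2b^2=ab$, $a^3b=ba$, etc. So I would (i) verify $w\,\xi=\xi\,w$, (ii) verify $\xi w\xi=\xi$, and (iii) verify $w-w^2\xi=0$ (not merely nilpotent — I expect it to vanish identically on $pq$), where $\xi=\frac18 bb^D(3a^3+3b^3-a-b)aa^D$; each reduces to a monomial identity in $a,b$ multiplied by $pq$, dischargeable by Lemma 3.4 and Lemma 3.5(1)--(6).

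Finally I would assemble: set $x=\xi+a^D(1-bb^D)+(1-aa^D)b^D$, and check $x(a+b)=(a+b)x$, $x(a+b)x=x$, and $a+b-(a+b)^2x$ is nilpotent, by splitting every product through the four Peirce corners. The cross terms vanish because $\xi\in pqRpq$, $a^D(1-bb^D)\in p(1-q)R$, $(1-aa^D)b^D\in(1-p)qR$ and $a,b$ commute with $p,q$; the diagonal blocks are handled as above; and the $(1-p)(1-q)$ block is nilpotent by the argument in the first paragraph. The one genuinely delicate point is getting the coefficient $\frac18$ and the combination $3a^3+3b^3-a-b$ exactly right, i.e. confirming that $(a+b)\cdot\frac18(3a^3+3b^3-a-b)\equiv aa^\sharp+bb^\sharp \pmod{pq}$; this is where I expect to spend the most effort, carefully tracking which of the reductions $a^4\equiv a$, $a^2b^2\equiv ab$, $a^3b\equiv ba$ is used at each step, and it is also the only place the hypothesis that $2$ is a unit enters (to divide by $8$).
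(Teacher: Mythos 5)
Your overall route is the same as the paper's: take the candidate $M=M_1+M_2+M_3$ with $M_1=\frac18 bb^D(3a^3+3b^3-a-b)aa^D$, $M_2=a^D(1-bb^D)$, $M_3=(1-aa^D)b^D$, use the commuting idempotents $p=aa^D$, $q=bb^D$ to make the cross terms vanish, and verify the three defining conditions of the Drazin inverse by reducing to the monomial identities of Lemmas 3.2, 3.4 and 3.5. The paper does exactly this; in particular it computes $(a+b)^2M=a^2a^D+b^2b^D$, so the remainder is $aa^{\pi}+bb^{\pi}$, nilpotent because $aa^{\pi}bb^{\pi}=bb^{\pi}aa^{\pi}=0$ via Lemma 3.5(7)--(8).

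There is, however, a genuine error at the heart of your plan: $a$ and $b$ do \emph{not} commute on the corner $pq$, so $M_1$ is not ``the formal inverse of $a+b$ in the commutative algebra generated by two commuting group-invertible elements.'' Take $R=\mathbb{H}$, $a=i$, $b=j$: then $a^3b=-ij=ji=ba$ and $b^3a=-ji=ij=ab$, both elements are invertible (so $p=q=1$ and the corner is all of $R$), $2$ is a unit, yet $ab=k\neq -k=ba$. What does hold on the corner is $a^4\equiv pq\equiv b^4$, $a^2\equiv b^2$, $ba\equiv a^3b$, $ab\equiv b^3a$ (the content of Lemma 3.5(1)--(6)), and the verification must keep $ab$ and $ba$ distinct, normalizing words by $ba^i=a^{3i}b$; e.g.\ $(a+b)^2\equiv a^2+a^3b+b^3a+b^2$ and not $a^2+2ab+b^2$. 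This is precisely the bookkeeping carried out in the paper's long expansion of $M_1(a+b)M$, and it is the step you yourself flag as delicate, so the commutativity assumption would derail it. (Your formula does survive in $\mathbb{H}$: $\frac18(3a^3+3b^3-a-b)=-\frac{i+j}{2}=(i+j)^{-1}$.) Two smaller points: your ``$a^{\sharp}=a^2$'' contradicts your later, correct, ``$a^3=a^{\sharp}$'' (since $a^4\equiv pq$); and the nilpotent remainder $aa^{\pi}+bb^{\pi}$ is not confined to the $(1-p)(1-q)$ corner --- on $p(1-q)$ the element $a+b$ is invertible-plus-nilpotent with only one-sided orthogonality ($ab(1-p)=0$ but $ba(1-p)$ need not vanish), so identifying its Drazin inverse with $a^D(1-q)$ requires the extra identities $a^Dba^D(1-bb^D)=0$ and $b^Dab^D(1-aa^D)=0$ that the paper proves inside step (b). None of this is unfixable, but as written the plan rests on a false commutativity claim exactly where the real work lies.
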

\begin{proof}
First, let $M=M_{1}+M_{2}+M_{3}$, where $M_{1}=\frac{1}{8}bb^{D}(3a^{3}+3b^{3}-a-b)aa^{D}$, $M_{2}=a^{D}(1-bb^{D})$, $M_{3}=(1-aa^{D})b^{D}$.
In what follows, we show that $M$ is the
Drazin inverse of $a+b$, i.e. the following conditions hold: (a) $M(a+b)=(a+b)M$,
(b) $M(a+b)M=M$ and (c) $(a+b)-(a+b)^{2}M$ is nilpotent.\\
(a) By Lemma 3.2 (2) and (3), we have
$$(a+b)M_{1}=\frac{1}{8}bb^{D}(a+b)(3a^{3}+3b^{3}-a-b)aa^{D}$$
and $$M_{1}(a+b)=\frac{1}{8}bb^{D}(3a^{3}+3b^{3}-a-b)(a+b)aa^{D}.$$
After a calculation we obtain $(a+b)bb^{D}(3a^{3}+3b^{3}-a-b)aa^{D}=bb^{D}(3a^{3}+3b^{3}-a-b)aa^{D}(a+b)$, this implies
$M_{1}(a+b)=(a+b)M_{1}$.\\
By Lemma 3.2 (2) and (3), we get
\begin{eqnarray}
\nonumber
(a+b)M_{2}-M_{2}(a+b)&=&(aa^{D}+ba^{D})(1-bb^{D})-(a^{D}a+a^{D}b)(1-bb^{D})\\ \nonumber
                     &=&(ba^{D}-a^{D}b)(1-bb^{D})\\ \nonumber
                     &=&((a^{D})^{3}b-a^{D}b)(1-bb^{D})\\ \nonumber
                     &=&((a^{D})^{4}-(a^{D})^{2})ab(1-bb^{D})\\
                     &=&0.
\end{eqnarray}
Note that $ba(1-aa^{D})=0$, then we have
\begin{eqnarray}
\nonumber
(a+b)M_{3}-M_{3}(a+b)&=&(1-aa^{D})(ab^{D}+bb^{D})-(1-aa^{D})(b^{D}a+b^{D}b)\\ \nonumber
                     &=&(1-aa^{D})(ab^{D}-b^{D}a)\\ \nonumber
                     &=&((b^{D})^{3}a-b^{D}a)(1-aa^{D})\\ \nonumber
                     &=&((b^{D})^{4}-(b^{D})^{2})ba(1-aa^{D})\\
                     &=&0.
\end{eqnarray}
By (2.3) and (2.4), we can obtain  $(a+b)M=M(a+b)$.

(b) By Lemma 3.2 (3), we get
\begin{eqnarray*}
M_{1}(a+b)M_{2}&=&\frac{1}{8}bb^{D}(3a^{3}+3b^{3}-a-b)aa^{D}(a+b)a^{D}(1-bb^{D})\\
               &=&\frac{1}{8}(3a^{3}+3b^{3}-a-b)aa^{D}(a+b)a^{D}(1-bb^{D})bb^{D}\\
               &=&0.
\end{eqnarray*} Similarly, we have
\begin{eqnarray*}
M_{1}(a+b)M_{3}&=&\frac{1}{8}bb^{D}(3a^{3}+3b^{3}-a-b)aa^{D}(a+b)(1-aa^{D})b^{D}\\
               &=&\frac{1}{8}bb^{D}(3a^{3}+3b^{3}-a-b)(a+b)aa^{D}(1-aa^{D})b^{D}\\
               &=&0
\end{eqnarray*}
and $$M_{2}(a+b)M_{1}=M_{2}(a+b)M_{3}=M_{3}(a+b)M_{1}=M_{3}(a+b)M_{2}=0.$$
By hypothesis and Lemma 3.5, we can simplify
\begin{eqnarray*}
M_{1}(a+b)M&=&\frac{1}{8}bb^{D}(3a^{3}+3b^{3}-a-b)aa^{D}(a+b)\frac{1}{8}bb^{D}(3a^{3}+3b^{3}-a-b)aa^{D}\\
           &=&\frac{1}{64}bb^{D}(3a^{3}+3b^{3}-a-b)(a+b)(3a^{3}+3b^{3}-a-b)aa^{D}\\
           &=&\frac{1}{64}bb^{D}(9a^{7}+9a^{4}b^{3}+9a^{3}ba^{3}+9a^{3}b^{4}+9b^{3}a^{4}+9b^{3}ab^{3}+9b^{4}a^{3}+9b^{7}\\
           &&-3a^{5}-3a^{2}b^{3}-3aba^{3}-3ab^{4}-3ba^{4}-3bab^{3}-3b^{2}a^{3}-3b^{5}\\
           &&-3a^{5}-3a^{4}b-3a^{3}ba-3a^{3}b^{2}-3b^{3}a^{2}-3b^{3}ab-3b^{4}a-3b^{5}\\
           &&+a^{3}+a^{2}b+aba+ab^{2}+ba^{2}+bab+b^{2}a+b^{3})aa^{D}\\
           &=&\frac{1}{64}bb^{D}(9a^{7}+9a^{4}b^{3}+9ba^{4}+9a^{3}b^{4}+9b^{3}a^{4}+9ab^{4}+9b^{4}a^{3}+9b^{7}\\
           &&-3a^{5}-3a^{2}b^{3}-3b^{3}a^{4}-3ab^{4}-3ba^{4}-3a^{3}b^{4}-3b^{2}a^{3}-3b^{5}\\
           &&-3a^{5}-3a^{4}b-3a^{6}b-3a^{3}b^{2}-3b^{3}a^{2}-3ab^{2}-3b^{4}a-3b^{5}\\
           &&+a^{3}+a^{2}b+a^{4}b+ab^{2}+a^{6}b+a^{3}b^{2}+a^{9}b^{2}+b^{3})aa^{D}\\
           &=&\frac{1}{64}bb^{D}(9a^{3}+9b^{3}+9b+9a^{3}+9b^{3}+9a+9a^{3}+9b^{3}\\
           &&-3a-3b-3b^{3}-3a-3b-3a^{3}-3a-3b\\
           &&-3a-3b-3a^{2}b-3a-3b-3ab^{2}-3a-3b\\
           &&+a^{3}+a^{2}b+b+ab^{2}+a^{2}b+a+ab^{2}+b^{3})aa^{D}\\
           &=&\frac{1}{64}bb^{D}(25a^{3}+25b^{3}-ab^{2}-a^{2}b-8a-8b)aa^{D}\\
           &=&\frac{1}{64}(25a^{D}bb^{D}+25b^{D}aa^{D}-a^{D}bb^{D}-b^{D}aa^{D}-8a^{D}b^{D}b^{D}-8b^{D}a^{D}a^{D})\\
           &=&\frac{1}{64}(24a^{D}bb^{D}+24b^{D}aa^{D}-8a^{D}b^{D}b^{D}-8b^{D}a^{D}a^{D})\\
           &=&\frac{1}{8}bb^{D}(3a^{3}+3b^{3}-a-b)aa^{D}.
\end{eqnarray*}
Note that $a^{D}ba^{D}(1-bb^{D})=a^{D}(1-bb^{D})baa^{D}a^{D}=0$ and $b^{D}ab^{D}(1-aa^{D})=0$.
After a calculation, we obtain
\begin{eqnarray*}
M(a+b)M&=&M_{1}+M_{2}(a+b)M_{2}+M_{3}(a+b)M_{3}\\
&=&M_{1}+a^{D}(a+b)a^{D}(1-bb^{D})+b^{D}(a+b)b^{D}(1-aa^{D})\\
&=&M_{1}+(a^{D}+a^{D}ba^{D})(1-bb^{D})+(b^{D}ab^{D}+b^{D})(1-aa^{D})\\
&=&M.
\end{eqnarray*}

(c) Note that $(aba^{D}+baa^{D}+bba^{D})(1-bb^{D})=0$ and $(aab^{D}+abb^{D}+bab^{D})(1-aa^{D})=0$.

Similar to the proof of (b), by Lemma 3.5, we have
\begin{eqnarray*}
(a+b)^{2}M&=&(a+b)[\frac{1}{8}bb^{D}(3a^{4}+3ab^{3}+3ba^{3}+3b^{4}-a^{2}-ab-ba-b^{2})aa^{D}\\
&&+(aa^{D}+ba^{D})(1-bb^{D})+(1-aa^{D})(ab^{D}+bb^{D})]\\
&=&\frac{1}{8}bb^{D}(3a^{5}+3a^{2}b^{3}+3aba^{3}+3ab^{4}+3ba^{4}+3bab^{3}+3a^{2}b^{3}+3b^{5}\\
&&-a^{3}-a^{2}b-aba-ab^{2}-ba^{2}-bab-b^{2}a-b^{3})aa^{D}+(a^{2}a^{D}+aba^{D}\\
&&+baa^{D}+bba^{D})(1-bb^{D})+(aab^{D}+abb^{D}+bab^{D}+bbb^{D})(1-aa^{D})\\
&=&\frac{1}{8}(8a^{D}b^{D}b^{D}+8b^{D}a^{D}a^{D})+a^{2}a^{D}(1-bb^{D})+b^{2}b^{D}(1-aa^{D})\\
&=&\frac{1}{8}(8a^{D}b^{D}b^{D}+8b^{D}a^{D}a^{D})+a^{2}a^{D}-a^{D}b^{D}b^{D}+b^{2}b^{D}-b^{D}a^{D}a^{D}\\
&=&a^{2}a^{D}+b^{2}b^{D}.
\end{eqnarray*}
Then $a+b-(a+b)^{2}M=aa^{\pi}+bb^{\pi}$, since $aa^{\pi}bb^{\pi}=bb^{\pi}aa^{\pi}=0$.
Hence, this shows $a+b-(a+b)^{2}M$ is nilpotent.
\end{proof}

\centerline {\bf ACKNOWLEDGMENTS} This research is supported by the
National Natural Science Foundation of China (10971024), the
Specialized Research Fund for the Doctoral Program of Higher
Education (20120092110020), the Natural Science Foundation of
Jiangsu Province(BK2010393) and the Foundation of Graduate
Innovation Program of Jiangsu Province(CXZZ12-0082).


\begin{thebibliography}{s2}

\bibitem{Cas1} N. Castro-Gonz\'{a}lez, Additive perturbation results for the Drazin inverse, Linear Algebra Appl. 397 (2005), 279-297.

\bibitem{Cas2} N. Castro-Gonz\'{a}lez and J.J. Koliha, New additive results for the g-Drazin inverse, Proc. Roy. Soc. Edinburgh Sect. A 134 (2004), 1085-1097.

\bibitem{Cas3} N. Castro-Gonz\'{a}lez and M.F. Martinez-Serrano, Expressions for the g-Drazin inverse of additive perturbed elements in a Banach algebra, Linear Algebra Appl. 432 (2010), 1885-1895.

\bibitem{C} J.L. Chen, G.F. Zhuang and Y.M. Wei, The Drazin inverse of a sum of morphisms, Acta. Math. Sci. 29A (2009), 538-552 (in Chinese).

\bibitem{DS1} D.S. Cvetkovi\'{c}-Ili\'{c}, The generalized Drazin inverse with commutativity up to a factor in a Banach algebra, Linear Algebra Appl. 431 (2009), 783-791.

\bibitem{DS2} D.S. Cvetkovi\'{c}-Ili\'{c}, D.S. Djordjevi\'{c} and Y.M. Wei, Additive results for the generalized Drazin inverse in a Banach algebra, Linear Algebra Appl. 418 (2006), 53-61.

\bibitem{DS3} D.S. Cvetkovi\'{c}-Ili\'{c}, X.J. Liu and Y.M. Wei, Some additive results for the generalized Drazin inverse in Banach algebra, Elect. J. Linear Algebra, 22 (2011), 1049-1058.

\bibitem{D1}  C.Y. Deng, The Drazin inverse of bounded operators with commutativity up to a factor, Appl. Math. Comput. 206 (2008), 695-703.

\bibitem{D2}  C.Y. Deng and Y.M. Wei, New additive results for the generalized Drazin inverse, J. Math. Anal. Appl. 370 (2010), 313-321.

\bibitem{Dj1} D.S. Djordjevi\'{c} and Y.M. Wei, Additive results for the generalized Drazin inverse, J. Austral. Math. Soc. 73 (2002), 115-125.

\bibitem{Dj2} D.S. Djordjevi\'{c} and Y.M. Wei, Outer generalized inverses in rings, Comm. Algebra 33 (2005), 3051-3060.

\bibitem{DRA}  M.P. Drazin, Pseudo-inverses in associative rings and semigroups, Amer. Math. Monthly 65 (1958), 506-514.

\bibitem{HARTE} R.E. Harte, Spectral projections, Irish Math. Soc. Newslett. 11 (1984), 10-15.

\bibitem{HARTWING1} R.E. Hartwig and J.M. Shoaf, Group inverses and Drazin inverses of bidiagonal and triangular Toeplitz matrices, J. Austral. Math. Soc. 24 (1977), 10-34.

\bibitem{HARTWING2} R.E. Hartwig, G.R. Wang and Y.M. Wei, Some additive results on Drazin inverse, Linear Algebra Appl. 322 (2001), 207-217.

\bibitem{K1} J.J. Koliha, A generalized Drazin inverse, Glasgow Math. J. 38 (1996), 367-381.

\bibitem{K2} J.J. Koliha and P. Patr\'{\i}cio, Elements of rings with equal spectral idempotents, J. Austral. Math. Soc. 72 (2002), 137-152.

\bibitem{L1}  X.J. Liu, L. Xu and Y.M. Yu, The representations of the Drazin inverse of differences of two matrices, Appl. Math. Comput.216 (2010) 3652-3661.

\bibitem{P} P. Patr\'{\i}cio and R.E. Hartwig, Some additive results on Drazin inverse, Appl. Math. Comput. 215 (2009), 530-538.

\bibitem{P2} R. Puystjens and M.C. Gouveia, Drazin invertibility for matrices over an arbitrary ring, Linear Algebra Appl. 385 (2004), 105-116.

\bibitem{P3}  R. Puystjens and R.E. Hartwig, The group inverse of a companion matrix, Linear Multilinear Algebra 43 (1997), 137-150.

\bibitem{W1} Y.M. Wei and C.Y. Deng, A note on additive results for the Drazin inverse, Linear Multilinear Algebra 59 (2011), 1319-1329.

\bibitem{W2} Y.M. Wei and G.R. Wang, The perturbation theory for the Drazin inverse and its applications, Linear Algebra Appl. 258 (1997), 179-186.

\bibitem{Z} G.F. Zhuang, J.L. Chen, D.S. Cvetkovi$\acute{c}$-Ili$\acute{c}$ and Y.M. Wei, Additive property of Drazin invertibility of elements in a ring,  Linear Multilinear Algebra 60 (2012), 903-910.

\end{thebibliography}
\end{document}